\newtheorem{theorem}{Theorem}[section]
\newtheorem{lemma}[theorem]{Lemma}
\newtheorem{corollary}[theorem]{Corollary}
\newtheorem{question}[theorem]{Question}
\newtheorem{example}[theorem]{Example}
\theoremstyle{definition}
\newtheorem{definition}[theorem]{Definition}
\theoremstyle{remark}
\begin{document}

\title[The strong Pytkeev property and strong countable completeness in (strongly) topological gyrogroups]
{The strong Pytkeev property and strong countable completeness in (strongly) topological gyrogroups}

\author{Meng Bao}
\address{(Meng Bao): College of Mathematics, Sichuan University, Chengdu 610064, P. R. China}
\email{mengbao95213@163.com}

\author{Xiaoyuan Zhang}
\address{(Xiaoyuan Zhang): 1. College of Mathematics, Sichuan University, Chengdu 610064, P. R. China; 2. School of Big Data Science, Hebei Finance University, Baoding 071051, P. R. China}
\email{405518791@qq.com}

\author{Xiaoquan Xu*}
\address{(Xiaoquan Xu): School of mathematics and statistics,
Minnan Normal University, Zhangzhou 363000, P. R. China}
\email{xiqxu2002@163.com}

\thanks{The authors are supported by the National Natural Science Foundation of China (11661057,12071199) and the Natural Science Foundation of Jiangxi Province, China (20192ACBL20045)\\
*corresponding author}

\keywords{Topological gyrogroups; strongly topological gyrogroup; the strong Pytkeev property; strongly countable completeness }%insert keywords
\subjclass[2010]{Primary 54A20; secondary 11B05; 26A03; 40A05; 40A30; 40A99.}%insert subject class

%\date{\today}
\begin{abstract}
A topological gyrogroup is a gyrogroup endowed with a topology such that the binary operation is jointly continuous and the inverse mapping is also continuous. In this paper, it is proved that if $G$ is a sequential topological gyrogroup with an $\omega^{\omega}$-base, then $G$ has the strong Pytkeev property. Moreover, some equivalent conditions about $\omega^{\omega}$-base and strong Pytkeev property are given in Baire topological gyrogroups. Finally, it is shown that if $G$ is a strongly countably complete strongly topological gyrogroup, then $G$ contains a closed, countably compact, admissible subgyrogroup $P$ such that the quotient space $G/P$ is metrizable and the canonical homomorphism $\pi :G\rightarrow G/P$ is closed.
\end{abstract}

\maketitle
\section{Introduction}

As we all know, first-countability as an important and basic topological property has been researched for many years. During the times, various topological properties generalizing first-countability have been posed. For example, following \cite{PEG}, Pytkeev claimed that every sequential space satisfies a property which is stronger than countable tightness. Then, in \cite{MT}, Malykhin and Tironi named the property {\it the Pytkeev property}. Furthermore, Tsaban and Zdomskyy \cite{TZ} strengthened this property and posed a concept of the strong Pytkeev property.

The strong Pytkeev property is usually studied combining the other spaces, such as topological groups, topological vector spaces, etc., see \cite{BTL,GKL1,LDRK,LX,SM}. In this paper, we mainly research the strong Pytkeev property in topological gyrogroups. The concept of a gyrogroup was introduced by Ungar in \cite{UA1988,UA} when he researched the $c$-ball of relativistically admissible velocities with the Einstein velocity addition. It is well-known that a gyrogroup has a weaker algebraic structure than a group. Then, Atiponrat \cite{AW} gave the concept of topological gyrogroups, that is, a topological gyrogroup is a gyrogroup endowed with a topology such that the binary operation is jointly continuous and the inverse mapping is also continuous. He proved that $T_{0}$ and $T_{3}$ are equivalent in topological gyrogroups. Moreover, he gave some examples of topological gyrogroups, such as M\"{o}bius gyrogroups, Einstein gyrogroups, and Proper Velocity gyrogroups, that were studied in \cite{FM, FM1,FM2,UA}. After then, Cai, Lin and He in \cite{CZ} proved that every topological gyrogroup is a rectifiable space and deduced that first-countability and metrizability are equivalent in topological gyrogroups. In fact, this kind of space has been studied for many years, see \cite{AW1,AW2020,BL2,BL3,BLX,LF,LF1,LF2,LF3,SL,ST,ST1,ST2,UA2005,UA2002,WAS2020}. In 2019, Bao and Lin \cite{BL} defined the concept of strongly topological gyrogroups and claimed that M\"{o}bius gyrogroups, Einstein gyrogroups, and Proper Velocity gyrogroups are all strongly topological gyrogroups. Furthermore, they gave an example to show that there exists a strongly topological gyrogroup which has an infinite $L$-subgyrogroup.

This paper is organized as follows. In Section 3, we mainly research the strong Pytkeev property in topological gyrogroups. We show that if $G$ is a topological gyrogroup with an $\omega^{\omega}$-base $\{U_{\alpha}:\alpha \in \mathbb{N}^{\mathbb{N}}\}$ and the set $\bigcup _{k\in \mathbb{N}}D_{k}(\alpha)$ is a neighborhood of the identity element $0$ for all $\alpha \in \mathbb{N}^{\mathbb{N}}$, then $G$ has the strong Pytkeev property. Moreover, we claim that if $G$ is a sequential topological gyrogroup with an $\omega^{\omega}$-base $\{U_{\alpha}:\alpha \in \mathbb{N}^{\mathbb{N}}\}$, then the set $\bigcup _{k\in \mathbb{N}}D_{k}(\alpha)$ is a neighborhood of the identity element $0$ for all $\alpha \in \mathbb{N}^{\mathbb{N}}$. The two results above can deduce that if $G$ is a sequential topological gyrogroup with an $\omega^{\omega}$-base $\{U_{\alpha}:\alpha \in \mathbb{N}^{\mathbb{N}}\}$, then $G$ has the strong Pytkeev property. In Section 4, we study the strongly countably complete property in strongly topological gyrogroups. We claim that if $G$ is a strongly countably complete strongly topological gyrogroup, then $G$ contains a closed, countably compact, admissible subgyrogroup $P$ such that the quotient space $G/P$ is metrizable and the canonical homomorphism $\pi :G\rightarrow G/P$ is closed.

\section{Preliminaries}

Throughout this paper, all topological spaces are assumed to be Hausdorff, unless otherwise is explicitly stated. Let $\mathbb{N}$ be the set of all positive integers and $\omega$ the first infinite ordinal. The readers may consult \cite{AA, E, linbook, UA} for notation and terminology not explicitly given here. Next we recall some definitions and facts.

\begin{definition}\cite{UA}
Let $(G, \oplus)$ be a groupoid. The system $(G,\oplus)$ is called a {\it gyrogroup}, if its binary operation satisfies the following conditions:

\smallskip
(G1) There exists a unique identity element $0\in G$ such that $0\oplus a=a=a\oplus0$ for all $a\in G$;

\smallskip
(G2) For each $x\in G$, there exists a unique inverse element $\ominus x\in G$ such that $\ominus x \oplus x=0=x\oplus (\ominus x)$;

\smallskip
(G3) For all $x, y\in G$, there exists $\mbox{gyr}[x, y]\in \mbox{Aut}(G, \oplus)$ with the property that $x\oplus (y\oplus z)=(x\oplus y)\oplus \mbox{gyr}[x, y](z)$ for all $z\in G$, and

\smallskip
(G4) For any $x, y\in G$, $\mbox{gyr}[x\oplus y, y]=\mbox{gyr}[x, y]$.
\end{definition}

Notice that a group is a gyrogroup $(G,\oplus)$ such that $\mbox{gyr}[x,y]$ is the identity function for all $x, y\in G$. The definition of a subgyrogroup is given as follows.

\begin{definition}\cite{ST}
Let $(G,\oplus)$ be a gyrogroup. A nonempty subset $H$ of $G$ is called a {\it subgyrogroup}, denoted
by $H\leq G$, if $H$ forms a gyrogroup under the operation inherited from $G$ and the restriction of $gyr[a,b]$ to $H$ is an automorphism of $H$ for all $a,b\in H$.

\smallskip
Furthermore, a subgyrogroup $H$ of $G$ is said to be an {\it $L$-subgyrogroup}, denoted
by $H\leq_{L} G$, if $gyr[a, h](H)=H$ for all $a\in G$ and $h\in H$.
\end{definition}

\begin{lemma}\cite{UA}\label{a}
Let $(G, \oplus)$ be a gyrogroup. Then for any $x, y, z\in G$, we obtain the following:

\begin{enumerate}
\smallskip
\item $(\ominus x)\oplus (x\oplus y)=y$. \ \ \ (left cancellation law)

\smallskip
\item $(x\oplus (\ominus y))\oplus gyr[x, \ominus y](y)=x$. \ \ \ (right cancellation law)

\smallskip
\item $(x\oplus gyr[x, y](\ominus y))\oplus y=x$.

\smallskip
\item $gyr[x, y](z)=\ominus (x\oplus y)\oplus (x\oplus (y\oplus z))$.
\end{enumerate}
\end{lemma}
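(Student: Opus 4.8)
The plan is to derive all four identities directly from the gyrogroup axioms (G1)--(G4), using as the single enabling fact that every left translation is a bijection. Write $\lambda_a(z)=a\oplus z$. Applying the left gyroassociative law (G3) to the triple $(\ominus a,a,z)$ and then (G2), (G1) gives $\lambda_{\ominus a}(\lambda_a(z))=(\ominus a)\oplus(a\oplus z)=\mbox{gyr}[\ominus a,a](z)$; symmetrically $\lambda_a(\lambda_{\ominus a}(z))=\mbox{gyr}[a,\ominus a](z)$. Since by (G3) each gyration lies in $\mbox{Aut}(G,\oplus)$ and is therefore bijective, $\lambda_a$ is both injective and surjective, so it is a bijection. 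In particular $G$ obeys genuine left cancellation: $a\oplus u=a\oplus v$ forces $u=v$.

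First I would pin down the gyrations at $0$. Feeding the triple $(0,a,z)$ into (G3) yields $a\oplus z=a\oplus\mbox{gyr}[0,a](z)$, and left cancellation gives $\mbox{gyr}[0,a]=\mbox{id}$. The loop property (G4) with $(\ominus a,a)$ reads $\mbox{gyr}[(\ominus a)\oplus a,a]=\mbox{gyr}[\ominus a,a]$, i.e. $\mbox{gyr}[0,a]=\mbox{gyr}[\ominus a,a]$, so $\mbox{gyr}[\ominus a,a]=\mbox{id}$. Combined with the computation above, $(\ominus a)\oplus(a\oplus z)=\mbox{gyr}[\ominus a,a](z)=z$, which is item~(1). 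Item~(2) is immediate from (G3) applied to $(x,\ominus y,y)$ together with (G2): since $(\ominus y)\oplus y=0$, the left side collapses to $x\oplus 0=x$, while the right side is exactly $(x\oplus(\ominus y))\oplus\mbox{gyr}[x,\ominus y](y)$. Item~(4) is just (G3) for $(x,y,z)$ rearranged: left-adding $\ominus(x\oplus y)$ to $x\oplus(y\oplus z)=(x\oplus y)\oplus\mbox{gyr}[x,y](z)$ and invoking item~(1) isolates $\mbox{gyr}[x,y](z)=\ominus(x\oplus y)\oplus(x\oplus(y\oplus z))$.

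The remaining identity, item~(3), is the genuine obstacle, because the two summands $x\oplus\mbox{gyr}[x,y](\ominus y)$ and $y$ are not arranged so that a single use of the left gyroassociative law cancels them; specializing item~(4) at $z=\ominus y$ only simplifies the gyration term to $\mbox{gyr}[x,y](\ominus y)=\ominus(x\oplus y)\oplus x$, after which a direct attack still produces a nested gyration. My plan here is to first establish the gyration inversion law $\mbox{gyr}[a,b]^{-1}=\mbox{gyr}[b,a]$ --- equivalently the right gyroassociative law $(a\oplus b)\oplus c=a\oplus(b\oplus\mbox{gyr}[b,a](c))$ --- as a consequence of the loop property (G4) and the even-gyration relation $\mbox{gyr}[\ominus a,\ominus b]=\mbox{gyr}[a,b]$. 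Granting this, applying the right gyroassociative law to $(x\oplus\mbox{gyr}[x,y](\ominus y))\oplus y$ reduces item~(3) to the single nested identity $\mbox{gyr}[\mbox{gyr}[x,y](\ominus y),x](y)=\mbox{gyr}[x,y](y)$, which I would verify using the loop property; once that is in place the two summands annihilate and the expression collapses to $x\oplus 0=x$. I expect the derivation of the inversion law to be the most delicate step, since it is exactly the point where the non-associativity of $\oplus$ must be controlled purely through the automorphisms $\mbox{gyr}[\cdot,\cdot]$.
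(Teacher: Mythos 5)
The paper itself offers no proof of this lemma (it is quoted from Ungar's book \cite{UA}), so your argument has to stand on its own, and it only partially does. Your derivations of items (1), (2) and (4) are correct and complete: getting injectivity and surjectivity of $\lambda_a$ from the bijectivity of $\mbox{gyr}[\ominus a,a]$ and $\mbox{gyr}[a,\ominus a]$, then $\mbox{gyr}[0,a]=\mbox{id}$, then $\mbox{gyr}[\ominus a,a]=\mbox{id}$ via (G4), is exactly the standard non-circular route, and (2) and (4) indeed fall out of single applications of (G3) plus left cancellation.

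Item (3), however, is not proved; it is deferred to two auxiliary facts, and both deferrals hide genuine gaps. First, you propose to obtain the gyration inversion law $\mbox{gyr}[a,b]^{-1}=\mbox{gyr}[b,a]$ from (G4) together with the even-gyration relation $\mbox{gyr}[\ominus a,\ominus b]=\mbox{gyr}[a,b]$. But the even relation is itself a nontrivial theorem, not an axiom, and in the standard development (Ungar's book) it is established together with, or downstream of, the inversion law — typically via the gyrosum inversion law $\ominus(a\oplus b)=\mbox{gyr}[a,b](\ominus b\ominus a)$ and the permutation representation $x\mapsto a\oplus A x$, which by itself only yields the form $\mbox{gyr}[a,b]^{-1}=\mbox{gyr}[\ominus b,\ominus a]$. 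So your claimed dependency order is unsupported and at risk of circularity, and neither statement is derived from (G1)--(G4) in your text. Second, your reduction of item (3) (via right gyroassociativity) to the nested identity $\mbox{gyr}[\mbox{gyr}[x,y](\ominus y),x](y)=\mbox{gyr}[x,y](y)$ is algebraically correct, but the claim that this identity can then be checked ``using the loop property'' is not substantiated: (G4) alone does not give it. Any verification I can see needs the right loop property $\mbox{gyr}[a,b\oplus a]=\mbox{gyr}[a,b]$ (which already presupposes inversion), the gyrosum inversion law, and the conjugation identity $\mbox{gyr}[\tau a,\tau b]=\tau\,\mbox{gyr}[a,b]\,\tau^{-1}$ for $\tau\in\mbox{Aut}(G,\oplus)$ — that is, machinery essentially as deep as item (3) itself. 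Note also that item (3) is equivalent (given the easy identity $(x\oplus y)\oplus\mbox{gyr}[x,y](\ominus y)=x$) to surjectivity of the right translation $z\mapsto z\oplus y$, which is precisely what cannot be had for free; so some version of the inversion-law development must actually be carried out, not just cited as a plan.
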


\begin{definition}\cite{AW}
A triple $(G, \tau, \oplus)$ is called a {\it topological gyrogroup} if the following statements hold:

\smallskip
(1) $(G, \tau)$ is a topological space.

\smallskip
(2) $(G, \oplus)$ is a gyrogroup.

\smallskip
(3) The binary operation $\oplus: G\times G\rightarrow G$ is jointly continuous while $G\times G$ is endowed with the product topology, and the operation of taking the inverse $\ominus (\cdot): G\rightarrow G$, i.e. $x\rightarrow \ominus x$, is also continuous.
\end{definition}

Obviously, every topological group is a topological gyrogroup. However, every topological gyrogroup whose gyrations are not identically equal to the identity is not a topological group.

\begin{example}\cite{AW}
The Einstein gyrogroup with the standard topology is a topological gyrogroup but not a topological group.
\end{example}

Let $\mathbb{R}_{\mathbf{c}}^{3}=\{\mathbf{v}\in \mathbb{R}^{3}:||\mathbf{v}||<\mathbf{c}\}$, where $\mathbf{c}$ is the vacuum speed of light, and $||\mathbf{v}||$ is the Euclidean norm of a vector $\mathbf{v}\in \mathbb{R}^{3}$. The Einstein velocity addition $\oplus _{E}:\mathbb{R}_{\mathbf{c}}^{3}\times \mathbb{R}_{\mathbf{c}}^{3}\rightarrow \mathbb{R}_{\mathbf{c}}^{3}$ is given as follows: $$\mathbf{u}\oplus _{E}\mathbf{v}=\frac{1}{1+\frac{\mathbf{u}\cdot \mathbf{v}}{\mathbf{c}^{2}}}(\mathbf{u}+\frac{1}{\gamma _{\mathbf{u}}}\mathbf{v}+\frac{1}{\mathbf{c}^{2}}\frac{\gamma _{\mathbf{u}}}{1+\gamma _{\mathbf{u}}}(\mathbf{u}\cdot \mathbf{v})\mathbf{u}),$$ for any $\mathbf{u,v}\in \mathbb{R}_{c}^{3}$, $\mathbf{u}\cdot \mathbf{v}$ is the usual dot product of vectors in $\mathbb{R}^{3}$, and $\gamma _{\mathbf{u}}$ is the gamma factor which is given by $$\gamma _{\mathbf{u}}=\frac{1}{\sqrt{1-\frac{\mathbf{u}\cdot \mathbf{u}}{\mathbf{c}^{2}}}}.$$

It was proved in \cite{UA} that $(\mathbb{R}^{3}_{c},\oplus _{E})$ is a gyrogroup but not a group. Moreover, with the standard topology inherited from $\mathbb{R}^{3}$, it is clear that $\oplus _{E}$ is continuous. Finally, $-\mathbf{u}$ is the inverse of $\mathbf{u}\in \mathbb{R}^{3}$ and the operation of taking the inverse is also continuous. Therefore, the Einstein gyrogroup $(\mathbb{R}^{3}_{c},\oplus _{E})$ with the standard topology inherited from $\mathbb{R}^{3}$ is a topological gyrogroup but not a topological group.

\begin{definition}\cite{BT,LPT,GK}
A point $x$ of a topological space $X$ is said to have a {\it neighborhood $\omega^{\omega}$-base} or a {\it local $\mathfrak{G}$-base} if there exists a base of neighborhoods at $x$ of the form $\{U_{\alpha}(x):\alpha \in \mathbb{N}^{\mathbb{N}}\}$ such that $U_{\beta}(x)\subset U_{\alpha}(x)$ for all elements $\alpha \leq \beta$ in $\mathbb{N}^{\mathbb{N}}$, where $\mathbb{N}^{\mathbb{N}}$ consisting of all functions from $\mathbb{N}$ to $\mathbb{N}$ is endowed with the natural partial order, ie., $f\leq g$ if and only if $f(n)\leq g(n)$ for all $n\in \mathbb{N}$. The space $X$ is said to have an {\it $\omega^{\omega}$-base} or a {\it $\mathfrak{G}$-base} if it has a neighborhood $\omega^{\omega}$-base or a local $\mathfrak{G}$-base at every point $x\in X$.
\end{definition}

Then we define the concept of an {\it $\omega^{\omega}$-base} or a {\it $\mathfrak{G}$-base} in topological gyrogroups.

\begin{definition}
Let $G$ be a topological gyrogroup. A family $\mathcal{U}=\{U_{\alpha}:\alpha \in \mathbb{N}^{\mathbb{N}}\}$ of neighborhoods of the identity element $0$ is called an {\it $\omega^{\omega}$-base} or a {\it $\mathfrak{G}$-base} if $\mathcal{U}$ is a base of neighborhoods at $0$ and $U_{\beta}\subset U_{\alpha}$ whenever $\alpha \leq \beta$ for all $\alpha,\beta \in \mathbb{N}^{\mathbb{N}}$.
\end{definition}

A topological space $Y$ has the {\it strong Pytkeev property} \cite{TZ} if for each $y\in Y$, there exists a countable family $\mathcal{D}$ of subsets of $Y$, such that for each neighborhood $U$ of $y$ and each $A\subset Y$ with $y\in \overline{A}\setminus A$, there is $D\in \mathcal{D}$ such that $D\subset U$ and $D\cap A$ is infinite.

Then we define this property for topological gyrogroups.

\begin{definition}
A topological gyrogroup $G$ has the {\it strong Pytkeev property} if there exists a sequence $\mathcal{D}=\{D_{n}\}_{n\in \mathbb{N}}$ of subsets of $G$ such that for each neighborhood $U$ of the identity $0$ and each $A\subset G$ with $0\in \overline{A}\setminus A$, there is $n\in \mathbb{N}$ such that $D_{n}\subset U$ and $D_{n}\cap A$ is infinite.
\end{definition}

\begin{definition}\cite{GKS}
A family $\mathcal{N}$ of subsets of a topological space $X$ is called a {\it $cn$-network} at a point $x\in X$ if for each neighborhood $O_{x}$ of $x$ the set $\bigcup \{N\in \mathcal{N}:x\in N\subset O_{x}\}$ is a neighborhood of $x$; $\mathcal{N}$ is a $cn$-network in $X$ if $\mathcal{N}$ is a $cn$-network at each point $x\in X$.

A family $\mathcal{N}$ of subsets of a topological space $X$ is called a {\it $ck$-network} at a point $x\in X$ if for each compact subset $K\subset O_{x}$ there exists a finite subfamily $\mathcal{F}\subset \mathcal{N}$ satisfying $x\in \bigcap \mathcal{F}$ and $K\subset \bigcup \mathcal{F}\subset O_{x}$; $\mathcal{N}$ is a $ck$-network in $X$ if $\mathcal{N}$ is a $ck$-network at each point $x\in X$.
\end{definition}

\section{Topological gyrogroups with strong Pytkeev property}
In this Section, we mainly research topological gyrogroups with $\omega^{\omega}$-base and strong Pytkeev property. We show that if $G$ is a topological gyrogroup with an $\omega^{\omega}$-base $\{U_{\alpha}:\alpha \in \mathbb{N}^{\mathbb{N}}\}$ and the set $\bigcup _{k\in \mathbb{N}}D_{k}(\alpha)$ is a neighborhood of the identity element $0$ for all $\alpha \in \mathbb{N}^{\mathbb{N}}$, then $G$ has the strong Pytkeev property. Moreover, we claim that if $G$ is a sequential topological gyrogroup with an $\omega^{\omega}$-base $\{U_{\alpha}:\alpha \in \mathbb{N}^{\mathbb{N}}\}$, then the set $\bigcup _{k\in \mathbb{N}}D_{k}(\alpha)$ is a neighborhood of the identity element $0$ for all $\alpha \in \mathbb{N}^{\mathbb{N}}$. Therefore, we conclude that if $G$ is a sequential topological gyrogroup with an $\omega^{\omega}$-base $\{U_{\alpha}:\alpha \in \mathbb{N}^{\mathbb{N}}\}$, then $G$ has the strong Pytkeev property. Finally, we give some equivalent conditions about $\omega^{\omega}$-base and strong Pytkeev property in Baire topological gyrogroups.

\bigskip
For every $\alpha =(\alpha_{i})_{i\in \mathbb{N}}\in \mathbb{N}^{\mathbb{N}}$ and each $k\in \mathbb{N}$, set $$I_{k}(\alpha)=\{\beta \in \mathbb{N}^{\mathbb{N}}:\beta _{i}=\alpha _{i} ~for~i=1,...,k\}.$$ Indeed, $I_{k}(\alpha)$ is defined by the finite subset $\{\alpha_{1},...,\alpha_{k}\}$ of $\mathbb{N}$. Therefore, the family $\{I_{k}(\alpha):k\in \mathbb{N},\alpha \in \mathbb{N}^{\mathbb{N}}\}$ is countable. Moreover, suppose that a topological gyrogroup $G$ has an $\omega^{\omega}$-base $\{U_{\alpha}:\alpha \in \mathbb{N}^{\mathbb{N}}\}$. Set $$D_{k}(\alpha)=\bigcap_{\beta \in I_{k}(\alpha)}U_{\beta}, ~~where~~\alpha =(\alpha _{i})_{i\in \mathbb{N}}\in \mathbb{N}^{\mathbb{N}}~~ and ~~k\in \mathbb{N}.$$ It is clear that $D_{k}(\alpha)\subset U_{\alpha}$ and $D_{k}(\alpha)\subset D_{m}(\alpha)$ for every $\alpha \in \mathbb{N}^{\mathbb{N}},k\in \mathbb{N}$ and every natural number $k\leq m$. Set $D_{0}(\alpha)=\{0\}$, for every $\alpha =(\alpha_{i})_{i\in \mathbb{N}}\in \mathbb{N}^{\mathbb{N}}.$ Then, put $\mathcal{D}=\{D_{k}(\alpha):k\in \mathbb{N},\alpha \in \mathbb{N}^{\mathbb{N}}\}.$

For every $\alpha =(\alpha_{i})_{i\in \mathbb{N}}\in \mathbb{N}^{\mathbb{N}}$ and each $k\in \mathbb{N}$, set $K_{\alpha}=\Pi _{i\in \mathbb{N}}[1,\alpha_{i}]\subset \mathbb{N}^{\mathbb{N}}$, $$L_{0}(\alpha)=\mathbb{N}^{\mathbb{N}}~~and~~L_{k}(\alpha)=\bigcup_{\beta \in I_{k}(\alpha)}K_{\beta}=\Pi_{i=1}^{k}[1,\alpha_{i}]\times \mathbb{N}^{\mathbb{N}\setminus \{1,...,k\}}.$$

\begin{lemma}\cite{GKL1}\label{3yl1}
Let $\alpha =(\alpha_{i})_{i\in \mathbb{N}}\in \mathbb{N}^{\mathbb{N}}$ and $\beta^{jk}=(\beta_{i}^{jk})_{i\in \mathbb{N}}\in L_{k-1}(\alpha)\setminus L_{k}(\alpha)$ for every $k\in \mathbb{N}$ and each $1\leq j\leq s_{k}<\infty$. Then there is $\gamma \in \mathbb{N}^{\mathbb{N}}$ such that $\alpha \leq \gamma$ and $\beta^{jk}\leq \gamma$ for every $k\in \mathbb{N}$ and each $1\leq j\leq s_{k}$.
\end{lemma}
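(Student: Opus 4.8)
The plan is to build $\gamma$ one coordinate at a time, using the hypothesis $\beta^{jk}\in L_{k-1}(\alpha)\setminus L_k(\alpha)$ to bound each coordinate with only finitely many constraints. First I would record what membership in $L_m(\alpha)=\Pi_{i=1}^m[1,\alpha_i]\times\mathbb{N}^{\mathbb{N}\setminus\{1,\dots,m\}}$ means: an element $\delta=(\delta_i)_{i\in\mathbb{N}}$ lies in $L_m(\alpha)$ exactly when $\delta_i\le\alpha_i$ for all $i\le m$. In particular $\beta^{jk}\in L_{k-1}(\alpha)$ forces $\beta_i^{jk}\le\alpha_i$ for every $i\le k-1$. (The further fact, coming from $\beta^{jk}\notin L_k(\alpha)$, that $\beta_k^{jk}>\alpha_k$ will not be needed.)

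The key observation is that this trivializes the tail in each coordinate. Fixing $i\in\mathbb{N}$, for every $k>i$ we have $i\le k-1$, so the previous remark gives $\beta_i^{jk}\le\alpha_i$ for all $1\le j\le s_k$; hence in coordinate $i$ the only values $\beta_i^{jk}$ that might exceed $\alpha_i$ occur for $k\le i$. Since each $s_k$ is finite, there are only finitely many pairs $(j,k)$ with $1\le k\le i$ and $1\le j\le s_k$. This finiteness is the crux of the argument, as it lets me take an honest maximum in each coordinate instead of an unbounded supremum.

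Accordingly I would set
\[
\gamma_i=\max\Big(\{\alpha_i\}\cup\{\beta_i^{jk}:1\le k\le i,\ 1\le j\le s_k\}\Big),\qquad i\in\mathbb{N},
\]
a maximum over a finite, nonempty set of positive integers, so $\gamma=(\gamma_i)_{i\in\mathbb{N}}\in\mathbb{N}^{\mathbb{N}}$. Then $\alpha\le\gamma$ is immediate from $\alpha_i\le\gamma_i$. For $\beta^{jk}\le\gamma$, fix $k,j$ and a coordinate $i$: if $i\ge k$ then $(j,k)$ is among the finitely many pairs defining $\gamma_i$, so $\beta_i^{jk}\le\gamma_i$; if $i<k$ then $i\le k-1$ and the key observation gives $\beta_i^{jk}\le\alpha_i\le\gamma_i$. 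In either case $\beta_i^{jk}\le\gamma_i$ for all $i$, i.e.\ $\beta^{jk}\le\gamma$. I anticipate no genuine obstacle beyond the finiteness step itself; the entire argument rests on recognizing that $\beta^{jk}\in L_{k-1}(\alpha)$ bounds each fixed coordinate for all large $k$, reducing the construction to a coordinatewise finite maximum.
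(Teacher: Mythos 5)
Your proof is correct. The paper itself offers no proof of this lemma (it is quoted from \cite{GKL1}), and your argument --- unpacking $\beta^{jk}\in L_{k-1}(\alpha)$ as $\beta_i^{jk}\le\alpha_i$ for $i\le k-1$, observing that for each fixed coordinate $i$ only the finitely many pairs with $k\le i$ can violate this bound, and taking the coordinatewise finite maximum --- is exactly the standard argument from the cited source, with the finiteness of each $s_k$ correctly identified as the crux.
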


\begin{theorem}\label{2dl5}
Suppose that $G$ is a topological gyrogroup with an $\omega^{\omega}$-base $\{U_{\alpha}:\alpha \in \mathbb{N}^{\mathbb{N}}\}$. Suppose further that the set $\bigcup _{k\in \mathbb{N}}D_{k}(\alpha)$ is a neighborhood of the identity element $0$ for all $\alpha \in \mathbb{N}^{\mathbb{N}}$. Then $G$ has the strong Pytkeev property.
\end{theorem}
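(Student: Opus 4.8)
The plan is to verify the strong Pytkeev property directly, using the explicit countable family $\mathcal{D}=\{D_{k}(\alpha):k\in\mathbb{N},\alpha\in\mathbb{N}^{\mathbb{N}}\}$ as the witnessing sequence. First I would record that $\mathcal{D}$ is genuinely countable: since $D_{k}(\alpha)=\bigcap_{\beta\in I_{k}(\alpha)}U_{\beta}$ and $I_{k}(\alpha)$ depends only on the finite tuple $(\alpha_{1},\dots,\alpha_{k})$, there are only countably many distinct sets $D_{k}(\alpha)$. Now fix a neighborhood $U$ of $0$ and a set $A\subseteq G$ with $0\in\overline{A}\setminus A$, and choose $\alpha^{0}$ with $U_{\alpha^{0}}\subseteq U$, so that $D_{k}(\alpha^{0})\subseteq U_{\alpha^{0}}\subseteq U$ for every $k$. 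I argue by contradiction: assume that $D_{k}(\alpha)\cap A$ is finite whenever $D_{k}(\alpha)\subseteq U$; in particular every $D_{k}(\alpha^{0})\cap A$ is finite.

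The decisive use of the standing hypothesis comes next. Since $\bigcup_{k}D_{k}(\alpha^{0})$ is a neighborhood of $0$, the trace $A'=A\cap\bigcup_{k}D_{k}(\alpha^{0})$ still clusters at $0$, i.e. $0\in\overline{A'}\setminus A'$; indeed, for any neighborhood $V$ of $0$ the set $V\cap\bigcup_{k}D_{k}(\alpha^{0})$ is again a neighborhood of $0$ and hence meets $A$, so it meets $A'$. Because $G$ is Hausdorff and $0\notin A'$, the set $A'$ is infinite. This step is the heart of the argument: it confines the relevant points of $A$ to exactly the region controlled by the sets $D_{k}(\alpha^{0})$. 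For $a\in A'$ let its \emph{level} be the least $k$ with $a\in D_{k}(\alpha^{0})$; since the level-$k$ points lie in $D_{k}(\alpha^{0})\cap A$, each level carries only finitely many points of $A'$, say $s_{k}<\infty$ of them.

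For a point $a$ of level $k$ I would then produce a witness $\beta\in L_{k-1}(\alpha^{0})\setminus L_{k}(\alpha^{0})$ with $a\notin U_{\beta}$. Indeed $a\notin D_{k-1}(\alpha^{0})$ yields some $\beta\in I_{k-1}(\alpha^{0})$ with $a\notin U_{\beta}$ (for $k=1$ one uses instead $a\neq 0$ together with the fact that $\{U_{\beta}\}$ is a base at $0$), and raising the $k$-th coordinate of $\beta$ above $\alpha^{0}_{k}$ only shrinks $U_{\beta}$, so it keeps $a\notin U_{\beta}$ while moving the modified index into $L_{k-1}(\alpha^{0})\setminus L_{k}(\alpha^{0})$. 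Organizing these witnesses $\beta^{jk}$ by level and applying Lemma \ref{3yl1}, I obtain $\gamma\in\mathbb{N}^{\mathbb{N}}$ with $\alpha^{0}\leq\gamma$ and $\beta^{jk}\leq\gamma$ for all $j,k$. Monotonicity of the base gives $U_{\gamma}\subseteq U_{\beta^{jk}}$, so every point of $A'$ falls outside $U_{\gamma}$; that is, $U_{\gamma}\cap A'=\emptyset$. But $U_{\gamma}$ is a neighborhood of $0$ and $0\in\overline{A'}$, so $U_{\gamma}\cap A'\neq\emptyset$, a contradiction. Hence some $D_{k}(\alpha)\subseteq U$ meets $A$ in an infinite set, which is precisely the strong Pytkeev property.

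I expect the main obstacle to be the interlocking bookkeeping rather than any single hard estimate. The crux is twofold: first, one must realize that the neighborhood hypothesis has to be spent on replacing $A$ by the trace $A'$, so that accumulation at $0$ is retained while every relevant point now has a finite level; second, one must arrange each excluding index $\beta$ to land exactly in $L_{k-1}(\alpha^{0})\setminus L_{k}(\alpha^{0})$, since this is precisely the hypothesis under which Lemma \ref{3yl1} can diagonalize the countably many witnesses into a single $\gamma$. Everything else (countability of $\mathcal{D}$, the reduction to $A'$, and the coordinate-raising that preserves $a\notin U_{\beta}$) is routine once these two points are in place.
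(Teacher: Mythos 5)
Your proposal is correct and follows essentially the same route as the paper: assume each level set $A\cap\bigl[D_{k}(\alpha)\setminus D_{k-1}(\alpha)\bigr]$ is finite, choose for each of its finitely many points an index excluding it from the corresponding basic neighborhood, diagonalize via Lemma \ref{3yl1}, and contradict $0\in\overline{A}$ using that $\bigcup_{k}D_{k}(\alpha)$ is a neighborhood of $0$. You are in fact slightly more careful than the paper at one step: the paper takes the excluding indices in $I_{k-1}(\alpha)$ and invokes Lemma \ref{3yl1} directly, whereas you explicitly raise the $k$-th coordinate so that they land in $L_{k-1}(\alpha)\setminus L_{k}(\alpha)$, which is the form of hypothesis the lemma actually requires.
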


\begin{proof}
Let $A\subset G$ be such that $0\in \overline{A}\setminus A$. So, for every $\alpha \in \mathbb{N}^{\mathbb{N}}$, the set $A\cap U_{\alpha}$ is infinite. Since $W=\bigcup_{k\in \mathbb{N}}D_{k}(\alpha)$ is a neighborhood of the identity element $0$, the intersection $A\cap W=\bigcup_{k\in \mathbb{N}}(A\cap [D_{k}(\alpha)\setminus D_{k-1}(\alpha)])$ is infinite. For an arbitrary neighborhood $U$ of $0$ in $G$, there exists $\alpha \in \mathbb{N}^{\mathbb{N}}$ such that $U_{\alpha}\subset U$. Then $D_{k}(\alpha)\subset U_{\alpha}\subset U$. Put $A_{k}=A\cap [D_{k}(\alpha)\setminus D_{k-1}(\alpha)]$ for all $k\in \mathbb{N}$. It suffices to show that $A_{k}$ is infinite for some $k\in \mathbb{N}$.

{\bf Claim} There exists $k\in \mathbb{N}$ such that $A_{k}$ is infinite.

Suppose on the contrary, for every $k\in \mathbb{N}$, $A$ is finite. Then we can find an infinite subset $I$ of $\mathbb{N}$ such that $A_{k}=\{a_{1}^{k},...,a_{s_{k}}^{k}\}$ if for all $k\in I$ for some natural number $s_{k}$ and $A_{k}=\emptyset$ if $k\not \in I$.

For all $k\in I$, take $\beta^{jk}=(\beta_{i}^{jk})_{i\in \mathbb{N}}\in I_{k-1}(\alpha)$ such that $a_{j}^{k}\not \in U_{\beta^{jk}}$ for every $1\leq j\leq s_{k}$. By Lemma \ref{3yl1}, there is a $\gamma \in \mathbb{N}^{\mathbb{N}}$ such that $\alpha \leq \gamma$ and $\beta^{jk}\leq \gamma$ for every $k\in I$ and each $1\leq j\leq s_{k}$. Therefore, for all $k\in I$ and every $1\leq j\leq s_{k}$, $a_{j}^{k}\not \in U_{\gamma}$. Since $W$ is a neighborhood of $0$, we can find $\delta \in \mathbb{N}^{\mathbb{N}}$, $\gamma \leq \delta$, such that $U_{\delta}\subset W$. Then $A\cap U_{\delta}$ is empty and thus $0\not \in \overline{A}\setminus A$, which is a contradiction.

\end{proof}

Naturally, we have the following question.

\begin{question}
Suppose that a topological gyrogroup $G$ with an $\omega^{\omega}$-base $\mathcal{U}=\{U_{\alpha}: \alpha \in \mathbb{N}^{\mathbb{N}}\}$ has the strong Pytkeev property. Is the set $\bigcup_{k\in \mathbb{N}}D_{k}(\alpha)$ a neighborhood of the identity element $0$ of $G$ for all $\alpha \in \omega^{\omega}$?
\end{question}

Then we show that if $G$ is a sequential topological gyrogroup with an $\omega^{\omega}$-base $\mathcal{U}=\{U_{\alpha}:\alpha \in \mathbb{N}^{\mathbb{N}}\}$, then $G$ has the strong Pytkeev property.

\begin{lemma}\label{2yl2}
Let $G$ be a topological gyrogroup with an $\omega^{\omega}$-base $\mathcal{U}=\{U_{\alpha}:\alpha \in \mathbb{N}^{\mathbb{N}}\}$. If $G$ is sequential, then the set $\bigcup_{k\in \mathbb{N}}D_{k}(\alpha)$ is an open neighborhood of the identity element $0$ for any $\alpha \in \mathbb{N}^{\mathbb{N}}$.
\end{lemma}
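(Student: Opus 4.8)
The plan is to show that $W:=\bigcup_{k\in\mathbb{N}}D_{k}(\alpha)$ is sequentially open; since $G$ is sequential, this is equivalent to $W$ being open, and as $0\in U_{\beta}$ for every $\beta$ we have $0\in D_{1}(\alpha)\subset W$, so $W$ is then an open neighborhood of $0$. Recall that $W$ is sequentially open precisely when every sequence converging to a point of $W$ is eventually in $W$. So I would argue by contradiction: suppose there is a sequence $x_{n}\to x$ with $x\in W$ but $x_{n}\notin W$ for infinitely many $n$, and after passing to a subsequence assume $x_{n}\notin W$ for all $n$. Since $x\in W$ and the $D_{k}(\alpha)$ increase with $k$, fix $k_{0}$ with $x\in D_{k_{0}}(\alpha)$; thus $x\in U_{\beta}$ for every $\beta\in I_{k_{0}}(\alpha)$.

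The heart of the argument is to manufacture, from the failures $x_{n}\notin W$, a single index on which everything collapses. For each $n$, since $x_{n}\notin W$ we have $x_{n}\notin D_{k_{0}+n-1}(\alpha)=\bigcap_{\beta\in I_{k_{0}+n-1}(\alpha)}U_{\beta}$, so I would pick $\beta^{n}\in I_{k_{0}+n-1}(\alpha)$ with $x_{n}\notin U_{\beta^{n}}$. As in the discussion preceding Theorem \ref{2dl5}, I may replace the $(k_{0}+n)$-th coordinate of $\beta^{n}$ by $\alpha_{k_{0}+n}+1$: this only enlarges $\beta^{n}$, hence shrinks $U_{\beta^{n}}$ and keeps $x_{n}\notin U_{\beta^{n}}$, while placing $\beta^{n}\in L_{k_{0}+n-1}(\alpha)\setminus L_{k_{0}+n}(\alpha)$. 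The point of spreading the witnesses over the \emph{distinct} levels $k_{0}+1,k_{0}+2,\dots$ is that each level $L_{k-1}(\alpha)\setminus L_{k}(\alpha)$ now carries at most one $\beta^{n}$, so the finiteness hypothesis $s_{k}<\infty$ of Lemma \ref{3yl1} holds (indeed $s_{k}\le 1$). Applying Lemma \ref{3yl1} to $\{\beta^{n}\}$ yields $\gamma\in\mathbb{N}^{\mathbb{N}}$ with $\alpha\le\gamma$ and $\beta^{n}\le\gamma$ for all $n$.

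Next I would perform a coordinate surgery to keep $x$ inside the relevant neighborhood. Define $\gamma'$ by $\gamma'_{i}=\alpha_{i}$ for $i\le k_{0}$ and $\gamma'_{i}=\gamma_{i}$ for $i>k_{0}$. Because $\beta^{n}\in L_{k_{0}+n-1}(\alpha)$ with $k_{0}+n-1\ge k_{0}$, we have $\beta^{n}_{i}\le\alpha_{i}$ for $i\le k_{0}$, so $\gamma'\ge\beta^{n}$ on those coordinates, while $\gamma'\ge\gamma\ge\beta^{n}$ on the rest; hence $\gamma'\ge\alpha$ and $\gamma'\ge\beta^{n}$ for every $n$, and by construction $\gamma'\in I_{k_{0}}(\alpha)$. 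Then $U_{\gamma'}\subset U_{\beta^{n}}$ gives $x_{n}\notin U_{\gamma'}$ for all $n$, whereas $\gamma'\in I_{k_{0}}(\alpha)$ gives $x\in D_{k_{0}}(\alpha)\subset U_{\gamma'}$. Since the members of a $\mathfrak{G}$-base may be taken to be open, $U_{\gamma'}$ is an open neighborhood of $x$ disjoint from $\{x_{n}:n\in\mathbb{N}\}$, contradicting $x_{n}\to x$. This contradiction shows $x_{n}\in W$ eventually, so $W$ is sequentially open and therefore open.

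The step I expect to be the main obstacle is the \emph{simultaneous} control of $\gamma'$: it must dominate all the witnesses $\beta^{n}$ (so that $U_{\gamma'}$ excludes every $x_{n}$) yet lie in $I_{k_{0}}(\alpha)$ (so that $x$ stays in $U_{\gamma'}$). This is exactly why the witnesses are distributed one per level above $k_{0}$, which both supplies the finiteness needed to invoke Lemma \ref{3yl1} and forces $\beta^{n}_{i}\le\alpha_{i}$ for $i\le k_{0}$, making the surgery producing $\gamma'$ legitimate; the closing contradiction then rests only on the openness of the single neighborhood $U_{\gamma'}$ against the convergence $x_{n}\to x$.
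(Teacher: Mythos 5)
Your proposal is correct and follows essentially the same route as the paper's proof: both reduce the claim to sequential openness, argue by contradiction by extracting one witness $\beta^{n}$ per level $k_{0}+n$ from the assumed failures $x_{n}\notin W$, and merge these into a single index that agrees with $\alpha$ on the first $k_{0}$ coordinates (so $x\in D_{k_{0}}(\alpha)\subset U_{\gamma'}$) while dominating every $\beta^{n}$ (so no $x_{n}$ lies in $U_{\gamma'}$); the paper simply builds this index directly by coordinatewise maxima, $\gamma_{i}=\max\{\beta^{1}_{i},\dots,\beta^{i-m}_{i}\}$ for $i>m$, instead of routing through Lemma \ref{3yl1} plus your coordinate surgery. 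One small nit: replacing the $(k_{0}+n)$-th coordinate of $\beta^{n}$ by $\alpha_{k_{0}+n}+1$ could \emph{decrease} it if that coordinate was already larger, so you should take the maximum of the two values (or skip the modification when $\beta^{n}$ already lies outside $L_{k_{0}+n}(\alpha)$) --- a one-word fix that does not affect the argument.
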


\begin{proof}
Let $A=\bigcup_{k\in \mathbb{N}}D_{k}(\alpha)$. Since $G$ is sequential, it suffices to prove that $A$ is sequentially open. Suppose that $x\in A$ and $\{x_{n}\}_{n\in \mathbb{N}}$ is a sequence converging to $x$ in $G$.

{\bf Claim.} There exists $N\in \mathbb{N}$ such that $x_{n}\in A$ for every $n>N$.

Suppose on the contrary, let $m$ be the minimal index such that $x\in D_{m}(\alpha)$. For every $\beta \in I_{m}(\alpha)$, $x\in U_{\beta}$. Since there exists $n_{1}$ such that $x_{n_{1}}\not \in A$, $x_{n_{1}}\not \in D_{m+1}(\alpha)$. Hence, $x_{n_{1}}\not \in U_{\beta^{1}}$ for some $\beta^{1}\in I_{m}(\alpha)$. There exists $n_{2}>n_{1}$ such that $x_{n_{2}}\not \in A$. Then, $x_{n_{2}}\not \in D_{m+2}(\alpha)$. For some $\beta^{2}\in I_{m+2}(\alpha)$, $x_{n_{2}}\not \in U_{\beta^{2}}$. By induction, we obtain a subsequence $\{x_{n_{k}}\}_{k}$ of $\{x_{n}\}_{n}$ and a sequence $\{\beta^{k}\}_{k}$ in $\mathbb{N}^{\mathbb{N}}$ such that $$x_{n_{k}}\not \in U_{\beta^{k}}~~and~~\beta^{k}\in I_{m+k}(\alpha)~~for~~every~~k\in \mathbb{N}.$$

Let $\gamma =(\gamma_{i})_{i\in \mathbb{N}}$, where $\gamma_{i}=\alpha_{i}$ if $1\leq i\leq m$ and $\gamma_{i}=max\{\beta_{i}^{1},\beta_{i}^{2},...,\beta_{i}^{i-m}\}$ if $i>m$. Then $x\in D_{m}(\alpha)\subset U_{\gamma}$. Moreover, since $U_{\gamma}\subset U_{\beta^{k}}$, we have $x_{n_{k}}\not \in U_{\gamma}$ for every $k\in \mathbb{N}$. Then $U_{\gamma}$ is open and $x\in U_{\gamma}$. Therefore, $x_{n}\not \rightarrow x$ which is a contradiction.

We conclude that $A$ is sequentially open and hence $A$ is open.
\end{proof}

\begin{theorem}
Let $G$ be a topological gyrogroup with an $\omega^{\omega}$-base $\mathcal{U}=\{U_{\alpha}:\alpha \in \mathbb{N}^{\mathbb{N}}\}$. If $G$ is a sequential space, $G$ has the strong Pytkeev property.
\end{theorem}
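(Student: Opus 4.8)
The plan is to obtain this statement as an immediate consequence of the two preceding results, which between them carry all of the work. Concretely, I would observe that Theorem~\ref{2dl5} establishes the strong Pytkeev property for any topological gyrogroup with an $\omega^{\omega}$-base under the single extra hypothesis that $\bigcup_{k\in\mathbb{N}}D_{k}(\alpha)$ is a neighborhood of the identity $0$ for every $\alpha\in\mathbb{N}^{\mathbb{N}}$. Since $G$ is assumed to be sequential and to possess the $\omega^{\omega}$-base $\mathcal{U}=\{U_{\alpha}:\alpha\in\mathbb{N}^{\mathbb{N}}\}$, Lemma~\ref{2yl2} applies verbatim and tells us that $\bigcup_{k\in\mathbb{N}}D_{k}(\alpha)$ is in fact an open neighborhood of $0$ for each $\alpha\in\mathbb{N}^{\mathbb{N}}$. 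Thus the additional hypothesis of Theorem~\ref{2dl5} is automatically satisfied.

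The proof therefore reduces to a one-line deduction: apply Lemma~\ref{2yl2} to verify the neighborhood condition, then invoke Theorem~\ref{2dl5} to conclude that $G$ has the strong Pytkeev property, witnessed by the same countable family $\mathcal{D}=\{D_{k}(\alpha):k\in\mathbb{N},\alpha\in\mathbb{N}^{\mathbb{N}}\}$ constructed just before Lemma~\ref{3yl1}. In particular, no new combinatorial or topological argument is needed at this stage.

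It is worth recording where the genuine difficulties actually lie, since they have already been dispatched earlier. The main obstacle in the whole development is the sequential-openness argument inside Lemma~\ref{2yl2}: given a sequence $\{x_{n}\}$ converging to a point $x\in\bigcup_{k}D_{k}(\alpha)$, one must rule out a subsequence leaving the set by diagonalizing over the indices $\beta^{k}\in I_{m+k}(\alpha)$ to build a single $\gamma$ with $U_{\gamma}$ an open neighborhood of $x$ missing every $x_{n_{k}}$, contradicting convergence. The second nontrivial ingredient is the amalgamation Lemma~\ref{3yl1}, used in Theorem~\ref{2dl5} to produce from finitely many $\beta^{jk}$ in each layer $L_{k-1}(\alpha)\setminus L_{k}(\alpha)$ a common upper bound $\gamma$; this is precisely what lets one pass from ``each $A_{k}$ finite'' to a neighborhood $U_{\delta}$ disjoint from $A$. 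Because both of these are already in hand, the final theorem is genuinely a corollary, and I would present it as such, with a proof consisting only of the chain Lemma~\ref{2yl2} $\Rightarrow$ hypothesis of Theorem~\ref{2dl5} $\Rightarrow$ strong Pytkeev property.
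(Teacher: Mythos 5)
Your proposal is correct and matches the paper's own proof exactly: the paper likewise deduces the theorem in two lines by applying Lemma~\ref{2yl2} to verify the neighborhood hypothesis of Theorem~\ref{2dl5} and then invoking that theorem. Your additional commentary on where the real work lies (the diagonalization in Lemma~\ref{2yl2} and the amalgamation in Lemma~\ref{3yl1}) is accurate but not needed for the deduction itself.
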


\begin{proof}
Since $G$ is a sequential space, it follows from Lemma \ref{2yl2} that the set $\bigcup_{k\in \mathbb{N}}D_{k}(\alpha)$ is an open neighborhood of the identity element $0$ for any $\alpha \in \mathbb{N}^{\mathbb{N}}$. By Theorem \ref{2dl5}, $G$ has the strong Pytkeev property.
\end{proof}

Let $\Omega$ be a set and $I$ be a partially ordered set with an order $\leq$. We say that a family $\{A_{i}\}_{i\in I}$ of subsets of $\Omega$ is $I$-decreasing if $A_{j}\subset A_{i}$ for every $i\leq j$ in $I$. Let $\mathbf{M}\subset \mathbb{N}^{\mathbb{N}}$ and let $\mathcal{U}=\{U_{\alpha}:\alpha \in \mathbf{M}\}$ be an $\mathbf{M}$-decreasing family of subsets of a set $\Omega$. Let $$DM_{k}(\alpha)=\bigcap_{\beta \in I_{k}(\alpha)\cap \mathbf{M}}U_{\beta}.$$ Define a countable family $\mathcal{D}_{\mathcal{U}}=\{DM_{k}(\alpha):\alpha \in \mathbf{M},k\in \mathbb{N}\}$, where $\mathcal{U}$ satisfies the condition $(\mathbf{D})$ if $U_{\alpha}=\bigcup_{k\in \mathbb{N}}DM_{k}(\alpha)$ for every $\alpha \in \mathbf{M}$, see \cite{GK}.

\begin{lemma}\label{2yl1}
If $G$ is a topological gyrogroup and has the strong Pytkeev property with a sequence $\mathcal{D}=\{D_{n}\}_{n\in \mathbb{N}}$, for every neighborhood $U$ of the identity element $0$, there is $\alpha =(\alpha_{i})_{i\in \mathbb{N}}\in \mathbb{N}^{\mathbb{N}}$ such that the set $N_{\alpha}=\bigcup_{i\in \mathbb{N}}(D_{\alpha_{i}}\cup \{0\})$ is a neighborhood of $0$ and $N_{\alpha}\subset U$.
\end{lemma}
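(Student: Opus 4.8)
The plan is to reduce the whole statement to a single topological claim about the witnessing family $\mathcal{D}$, and then to package the relevant indices into a function $\alpha\in\mathbb{N}^{\mathbb{N}}$. Given a neighborhood $U$ of $0$, I would set $S=\{n\in\mathbb{N}:D_n\subset U\}$ and let $W=\bigcup_{n\in S}D_n\cup\{0\}$. By construction $W\subset U$, since every $D_n$ with $n\in S$ lies in $U$ and $0\in U$. Thus, once I know that $W$ is a neighborhood of $0$, it only remains to realize $W$ as some $N_{\alpha}$. Note that the gyrogroup structure plays no genuine role here: the identity $0$ enters merely as a distinguished point, and the argument is purely topological.

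The heart of the matter is the claim that $W$ is a neighborhood of $0$, which I would prove by contradiction using the strong Pytkeev property. Suppose $W$ is not a neighborhood of $0$ and put $A=G\setminus W$. Then $0\notin A$ because $0\in W$, while every neighborhood of $0$ meets $A$ (otherwise some open set would witness $W$ as a neighborhood of $0$), so $0\in\overline{A}\setminus A$. Applying the strong Pytkeev property to this $U$ and this $A$ yields an index $n$ with $D_n\subset U$ and $D_n\cap A$ infinite. But $D_n\subset U$ forces $n\in S$, hence $D_n\subset W$, so $D_n\cap A=D_n\cap(G\setminus W)=\emptyset$, contradicting the infinitude of $D_n\cap A$. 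This contradiction establishes that $W$ is a neighborhood of $0$. Setting up the correct set $A$ so that the hypothesis applies cleanly is the one mildly delicate point of the proof; everything afterward is bookkeeping.

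Finally I would encode $S$ as a function. Since $S\subset\mathbb{N}$ is countable, I enumerate it and define $\alpha=(\alpha_i)_{i\in\mathbb{N}}$ so that $\{\alpha_i:i\in\mathbb{N}\}=S$ (listing $S$ if it is infinite, and repeating the last term if $S$ is finite and nonempty). Then
$$N_{\alpha}=\bigcup_{i\in\mathbb{N}}\bigl(D_{\alpha_i}\cup\{0\}\bigr)=\Bigl(\bigcup_{n\in S}D_n\Bigr)\cup\{0\}=W,$$
which is a neighborhood of $0$ contained in $U$, as required. The only degenerate possibility is $S=\emptyset$, which by the claim forces $W=\{0\}$ to be open, i.e. $0$ is isolated; this case is handled by observing that one may assume $\{0\}\in\mathcal{D}$ from the outset, since adjoining $\{0\}$ to $\mathcal{D}$ preserves the strong Pytkeev property and then $\{0\}\subset U$ places $\{0\}$ into $S$. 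With this convention the construction of $\alpha$ goes through in all cases, completing the proof.
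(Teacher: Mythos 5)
Your proof is correct and follows essentially the same route as the paper: form the index set of those $D_n$ contained in $U$, take their union, and use the strong Pytkeev property applied to the complement to show this union must be a neighborhood of $0$ (the paper takes $A=U\setminus N_\alpha$ rather than $G\setminus W$, an immaterial variation). Your explicit handling of the degenerate cases where the index set is finite or empty is a point the paper glosses over, but otherwise the two arguments coincide.
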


\begin{proof}
Set $J=\{n\in N:D_{n}\subset U\}$. Then $J=\{\alpha_{i}\}_{i\in \mathbb{N}}$, where $\alpha_{1}<\alpha_{2}<\cdot \cdot \cdot$. Set $\alpha =(\alpha_{i})_{i\in \mathbb{N}}$. Therefore, $\alpha \in \mathbb{N}^{\mathbb{N}}$ and $N_{\alpha}\subset U$.

Suppose on the contrary that $N_{\alpha}$ is not a neighborhood of the identity element $0$. Then $0\in \overline{U\setminus N_{\alpha}}$. Since $0\not \in U\setminus N_{\alpha}$, it follows that $0\in (\overline{U\setminus N_{\alpha}})\setminus (U\setminus N_{\alpha})$. By the definition of the Pytkeev property, there exists $m\in \mathbb{N}$ such that $D_{m}\subset U$ and $D_{m}\cap (U\setminus N_{\alpha})$ is infinite. Therefore, it is a contradiction with the choice of $J$ and the definition of $N_{\alpha}$. Hence, $N_{\alpha}$ is a neighborhood of $0$.
\end{proof}

\begin{theorem}
If $G$ is a topological gyrogroup and has the strong Pytkeev property with a sequence $\mathcal{D}=\{D_{n}\}_{n\in \mathbb{N}}$, then $G$ has a base $\{U_{\alpha}:\alpha \in \mathbf{M}\}$ of neighborhood at $0$, where

(1) $\mathbf{M}$ is a subset of the partially ordered set $\mathbb{N}^{\mathbb{N}}$;

\smallskip
(2) If $\alpha \in \mathbf{M}$ and $\beta \in \mathbb{N}^{\mathbb{N}}$ are such that $\beta \leq \alpha$, then $\beta \in \mathbf{M}$, and

(3) $U_{\beta}\subset U_{\alpha}$, where $\alpha \leq \beta$ for $\alpha ,\beta \in \mathbf{M}$.
\end{theorem}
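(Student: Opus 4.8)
The plan is to extract from Lemma \ref{2yl1} a concrete base of neighborhoods of $0$, and then reindex it by a downward-closed subset of $\mathbb{N}^{\mathbb{N}}$ in an order-reversing way so that conditions (1)--(3) all hold. For a subset $R\subseteq\mathbb{N}$ write $N_R=\{0\}\cup\bigcup_{n\in R}D_n$; the set $N_\alpha$ appearing in Lemma \ref{2yl1} is exactly $N_{\mathrm{rng}(\alpha)}$, so $N_\alpha$ depends only on the range of $\alpha$. Let $\mathcal{R}=\{R\subseteq\mathbb{N}:N_R\text{ is a neighborhood of }0\}$. First I would record two facts. (i) $\mathcal{R}$ is upward closed: if $R\subseteq R'$ then $N_R\subseteq N_{R'}$, and any superset of a neighborhood of $0$ is again a neighborhood of $0$. (ii) $\{N_R:R\in\mathcal{R}\}$ is a base at $0$: given a neighborhood $U$, Lemma \ref{2yl1} yields $\alpha$ with $N_\alpha\subseteq U$ a neighborhood, and setting $R_U:=\mathrm{rng}(\alpha)=\{n:D_n\subseteq U\}\in\mathcal{R}$ we obtain $N_{R_U}\subseteq U$.

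The difficulty is that this family is monotone in the \emph{wrong} direction: the map $R\mapsto N_R$ is inclusion-preserving, so indexing directly by the $\alpha$ for which $N_\alpha$ is a neighborhood would produce an \emph{increasing} family over an \emph{upward}-closed index set, whereas (2) and (3) demand a \emph{decreasing} family over a \emph{downward}-closed one. I would resolve this by an order-reversing encoding of subsets into $\{1,2\}^{\mathbb{N}}\subseteq\mathbb{N}^{\mathbb{N}}$. For $R\subseteq\mathbb{N}$ define $e_R\in\{1,2\}^{\mathbb{N}}$ by $e_R(n)=1$ if $n\in R$ and $e_R(n)=2$ if $n\notin R$. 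A direct check gives the key equivalence $R\subseteq R'\iff e_{R'}\le e_R$ in the product order on $\mathbb{N}^{\mathbb{N}}$. Then set $\mathbf{M}=\{e_R:R\in\mathcal{R}\}$ and $U_{e_R}=N_R$, which is well defined since $R\mapsto e_R$ is a bijection onto $\{1,2\}^{\mathbb{N}}$.

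It then remains to verify the three conditions. Condition (1) is immediate, since $\mathbf{M}\subseteq\{1,2\}^{\mathbb{N}}\subseteq\mathbb{N}^{\mathbb{N}}$. For (3), if $e_R\le e_{R'}$ with both in $\mathbf{M}$, then $R'\subseteq R$ by the equivalence, hence $U_{e_{R'}}=N_{R'}\subseteq N_R=U_{e_R}$, which is exactly the required containment. For (2), take $\beta\le e_R$ with $R\in\mathcal{R}$; since $e_R$ takes values in $\{1,2\}$, so does $\beta$, whence $\beta=e_{R'}$ for $R'=\{n:\beta(n)=1\}$, and $\beta\le e_R$ forces $R\subseteq R'$; upward closure of $\mathcal{R}$ then gives $R'\in\mathcal{R}$, so $\beta\in\mathbf{M}$. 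Finally, $\{U_\alpha:\alpha\in\mathbf{M}\}$ is a base at $0$ by fact (ii), and each $U_\alpha$ is a neighborhood by the definition of $\mathcal{R}$.

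The main obstacle here is conceptual rather than computational: the neighborhoods supplied by the strong Pytkeev property through Lemma \ref{2yl1} grow with their index, so the real content is to see that they can nonetheless be realized as a decreasing family over a downward-closed index set. The $\{1,2\}$-valued encoding $R\mapsto e_R$ is precisely what converts the upward-closed, inclusion-preserving family $\mathcal{R}$ into the downward-closed, decreasing family demanded by (2) and (3); the one point that genuinely needs care is checking that the downward closure of $\mathbf{M}$ inside $\mathbb{N}^{\mathbb{N}}$ never escapes $\{1,2\}^{\mathbb{N}}$, and hence stays inside $\mathcal{R}$.
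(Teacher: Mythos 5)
Your argument is correct, and it reaches the conclusion by a genuinely different construction than the paper. The paper keeps the indexing over all of $\mathbb{N}^{\mathbb{N}}$: it forms the sliding-window intersections $D_{k}^{i}=\bigcap_{l=1}^{k}D_{i-1+l}$, which are decreasing in $k$, sets $U_{\alpha}=\bigcup_{i\in\mathbb{N}}D_{\alpha_{i}}^{i}$ (so that $\beta\le\alpha$ forces $U_{\alpha}\subset U_{\beta}$ automatically), checks that every set of the form $\bigcup_{k}D_{n_{k}}$ supplied by Lemma \ref{2yl1} is realized as some $U_{\alpha}$, and finally takes $\mathbf{M}=\{\alpha:U_{\alpha}\text{ is a neighborhood of }0\}$; downward closure of $\mathbf{M}$ is then immediate from the antitonicity. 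You instead work directly with the sets $N_{R}$ of Lemma \ref{2yl1}, observe that the relevant family $\mathcal{R}$ of index sets is upward closed under inclusion, and fix the "wrong-way" monotonicity by the order-reversing embedding $R\mapsto e_{R}$ into $\{1,2\}^{\mathbb{N}}$; your key observation that any $\beta\le e_{R}$ in $\mathbb{N}^{\mathbb{N}}$ is forced back into $\{1,2\}^{\mathbb{N}}$ (because $\beta(n)\ge 1$) is exactly what makes condition (2) go through, and is checked correctly. Both proofs lean on Lemma \ref{2yl1} for the base property and both give $|\mathbf{M}|\le\mathfrak{c}$, so the subsequent corollary is unaffected. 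The trade-off is that your $\mathbf{M}$ is more economical and the order reversal is completely transparent, while the paper's construction defines $U_{\alpha}$ for every $\alpha\in\mathbb{N}^{\mathbb{N}}$ as a globally decreasing family, which is closer in spirit to an $\omega^{\omega}$-base and meshes with the condition $(\mathbf{D})$ machinery used later in the section.
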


\begin{proof}
Since every topological gyrogroup is homogeneous, without loss of generality, we suppose that $0\in D_{n}$ for every $n\in \mathbb{N}$. For each $k,i\in \mathbb{N}$, set $D_{k}^{i}=\bigcap_{l=1}^{k}D_{i-1+l}$. So the sequence $\{D_{k}^{i}\}_{k\in \mathbb{N}}$ is decreasing for every $i\in \mathbb{N}$. Furthermore, for each $\alpha =(\alpha_{i})_{i\in \mathbb{N}}\in \mathbb{N}^{\mathbb{N}}$, put $U_{\alpha}=\bigcup_{i\in \mathbb{N}}D_{\alpha_{i}}^{i}$. It is clear that $U_{\alpha}\subset U_{\beta}$ for each $\alpha ,\beta \in \mathbb{N}^{\mathbb{N}}$ with $\beta \leq \alpha$.

Fix an increasing sequence $0=n_{0}<n_{1}<n_{2}<\cdot \cdot \cdot$ in $\mathbb{N}$ such that $\bigcup_{k\in \mathbb{N}}D_{n_{k}}$ is a neighborhood at $0$.

{\bf Claim.} There is $\alpha =(\alpha_{i})_{i\in \mathbb{N}}\in \mathbb{N}^{\mathbb{N}}$ such that $U_{\alpha}=\bigcup_{k\in \mathbb{N}}D_{n_{k}}$.

Indeed, if $i=n_{k}$ for some $k\in \mathbb{N}$, we set $\alpha_{i}=1$. So $D_{\alpha_{i}}^{i}=D_{n_{k}}$. However, if $n_{k-1}<i<n_{k}$ for some $k\in \mathbb{N}$, we set $\alpha_{i}=n_{k}-i+1$. Then $D_{\alpha_{i}}^{i}=\bigcap_{l=1}^{\alpha_{i}}D_{i-1+l}\subset D_{n_{k}}$. Thus, $U_{\alpha}=\bigcup_{k\in \mathbb{N}}D_{n_{k}}$.

Set $M=\{\alpha \in \mathbb{N}^{\mathbb{N}}:U_{\alpha}~~is~~a~~neighborhood~~of~~0\}$. By Lemma \ref{2yl1}, the set $\{U_{\alpha}:\alpha \in \mathbf{M}\}$ forms a base at $0$ satisfying (iii). Indeed, let $\alpha \in \mathbf{M}$ and $\beta \in \mathbb{N}^{\mathbb{N}}$ be such that $\beta \leq \alpha$, then $U_{\alpha}\subset U_{\beta}$. Therefore, $U_{\beta}$ is a neighborhood of the identity element $0$. Hence, $\beta \in \mathbf{M}$.
\end{proof}

\begin{corollary}
If a topological gyrogroup $G$ has the strong Pytkeev property, then $\chi (G) \leq \mathfrak{c}$.
\end{corollary}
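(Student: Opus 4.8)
The plan is to read the bound on the character directly off the immediately preceding theorem, and then upgrade it from the identity to every point using the homogeneity of topological gyrogroups. First I would invoke that theorem to obtain a base $\{U_{\alpha}:\alpha\in\mathbf{M}\}$ of neighborhoods at the identity $0$, where $\mathbf{M}$ is a subset of the partially ordered set $\mathbb{N}^{\mathbb{N}}$. Since $|\mathbb{N}^{\mathbb{N}}|=\aleph_{0}^{\aleph_{0}}=2^{\aleph_{0}}=\mathfrak{c}$, the indexing set satisfies $|\mathbf{M}|\leq\mathfrak{c}$, so $\{U_{\alpha}:\alpha\in\mathbf{M}\}$ is a local base at $0$ of cardinality at most $\mathfrak{c}$. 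This already yields $\chi(G,0)\leq\mathfrak{c}$.

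Next I would pass from the identity to an arbitrary point. As noted in the proof of the preceding theorem, every topological gyrogroup is homogeneous: for each $g\in G$ the left translation $x\mapsto g\oplus x$ is a homeomorphism of $G$ sending $0$ to $g$, since it is continuous by joint continuity of $\oplus$ and its inverse $x\mapsto(\ominus g)\oplus x$ is continuous by the left cancellation law of Lemma~\ref{a}. Hence the translate $\{g\oplus U_{\alpha}:\alpha\in\mathbf{M}\}$ is a local base at $g$ of cardinality at most $\mathfrak{c}$, so the character is the same at every point and $\chi(G)=\chi(G,0)\leq\mathfrak{c}$.

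I expect essentially no obstacle here: all of the substantive work is carried by the preceding theorem, which manufactures a neighborhood base at $0$ indexed by a subset of $\mathbb{N}^{\mathbb{N}}$. The only ingredients left to check are the routine cardinality identity $|\mathbb{N}^{\mathbb{N}}|=\mathfrak{c}$ and the homogeneity of $G$, both standard. If one wanted the statement fully self-contained rather than chaining off the previous theorem, the alternative would be to build the base $U_{\alpha}=\bigcup_{i\in\mathbb{N}}D_{\alpha_{i}}^{i}$ directly from the countable strong-Pytkeev family $\mathcal{D}=\{D_{n}\}_{n\in\mathbb{N}}$ and apply Lemma~\ref{2yl1} to see which $\alpha\in\mathbb{N}^{\mathbb{N}}$ give genuine neighborhoods; but invoking the preceding theorem is cleaner and avoids repeating that construction.
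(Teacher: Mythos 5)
Your argument is correct and is precisely the intended derivation: the paper states this corollary without proof because it follows immediately from the preceding theorem, whose neighborhood base at $0$ is indexed by a subset of $\mathbb{N}^{\mathbb{N}}$ of cardinality at most $\mathfrak{c}$, combined with the homogeneity of topological gyrogroups (already invoked in that theorem's proof). Both your cardinality observation and your justification of homogeneity via left translations and the left cancellation law are sound.
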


Finally in this section, we give some equivalent conditions about Baire topological gyrogroups.

\begin{lemma}\cite{GK}\label{1yl2}
Let $x$ be a point of a topological space $X$. Then $X$ has a countable $cn$-network at $x$ if and only if $X$ has a small base $\mathcal{U}(x)=\{U_{\alpha}:\alpha \in \mathbf{M}_{x}\}$ at $x$ satisfying the condition $(\mathbf{D})$. In that case the family $\mathcal{D}_{\mathcal{U}(x)}$ is a countable $cn$-network at $x$.
\end{lemma}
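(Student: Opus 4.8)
The statement is an equivalence together with the concrete identification of the $cn$-network in the forward-built case, so I would prove the two implications separately, handling the backward one first since it simultaneously yields the final ``in that case'' assertion. Throughout, the countability of $\mathcal{D}_{\mathcal{U}(x)}=\{DM_k(\alpha):\alpha\in\mathbf{M}_x,\,k\in\mathbb{N}\}$ comes for free: each $DM_k(\alpha)$ depends only on the finite initial segment $(\alpha_1,\dots,\alpha_k)$, through $I_k(\alpha)\cap\mathbf{M}_x$, and the family $\{I_k(\alpha):k\in\mathbb{N},\,\alpha\in\mathbb{N}^{\mathbb{N}}\}$ is countable, exactly as observed just before Theorem \ref{2dl5}.

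Assuming a small base $\mathcal{U}(x)=\{U_\alpha:\alpha\in\mathbf{M}_x\}$ satisfying $(\mathbf{D})$, I would check directly that $\mathcal{D}_{\mathcal{U}(x)}$ is a $cn$-network at $x$. Fix a neighborhood $O_x$ of $x$ and, using that $\mathcal{U}(x)$ is a base, choose $\alpha\in\mathbf{M}_x$ with $U_\alpha\subset O_x$. Since $\alpha\in I_k(\alpha)\cap\mathbf{M}_x$ for every $k$, each set $DM_k(\alpha)=\bigcap_{\beta\in I_k(\alpha)\cap\mathbf{M}_x}U_\beta$ satisfies $x\in DM_k(\alpha)\subset U_\alpha\subset O_x$. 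Condition $(\mathbf{D})$ then gives $\bigcup_{k}DM_k(\alpha)=U_\alpha$, a neighborhood of $x$, so $\bigcup\{N\in\mathcal{D}_{\mathcal{U}(x)}:x\in N\subset O_x\}\supseteq U_\alpha$ is a neighborhood of $x$. This is the easy, essentially formal, half, and it also establishes the closing sentence of the lemma.

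For the converse I would start from a countable $cn$-network $\mathcal{N}=\{N_n:n\in\mathbb{N}\}$ at $x$, discard the members not containing $x$ (they are irrelevant to the $cn$-condition), and replace $\mathcal{N}$ by its closure under finite intersections; the enlarged family is still countable, still consists of sets containing $x$, and is still a $cn$-network at $x$ since it refines $\mathcal{N}$. The plan is then to manufacture a downward-closed index set $\mathbf{M}_x\subset\mathbb{N}^{\mathbb{N}}$ and an $\mathbf{M}_x$-decreasing family $\{U_\alpha\}$, each $U_\alpha$ a union of members of $\mathcal{N}$, patterned on the diagonal construction used in the theorem preceding this lemma: setting $D^i_k=\bigcap_{l=1}^{k}N_{i-1+l}$ and $U_\alpha=\bigcup_{i}D^i_{\alpha_i}$, so that $\{D^i_k\}_k$ is decreasing and $U_\beta\subset U_\alpha$ whenever $\alpha\le\beta$. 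For this family the verification of $(\mathbf{D})$ is routine: every $\beta\in I_k(\alpha)$ agrees with $\alpha$ on the first $k$ coordinates, so $U_\beta\supseteq\bigcup_{i\le k}D^i_{\alpha_i}$, whence $DM_k(\alpha)\supseteq\bigcup_{i\le k}D^i_{\alpha_i}$ and $\bigcup_k DM_k(\alpha)=\bigcup_i D^i_{\alpha_i}=U_\alpha$ for each $\alpha\in\mathbf{M}_x$; taking $\mathbf{M}_x=\{\alpha:U_\alpha\text{ is a neighborhood of }x\}$ makes $\mathbf{M}_x$ downward closed, because enlarging the coordinates of $\alpha$ only shrinks $U_\alpha$.

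The main obstacle is the remaining requirement, that $\{U_\alpha:\alpha\in\mathbf{M}_x\}$ be cofinal, i.e. a genuine neighborhood base: given an arbitrary neighborhood $O$ of $x$, one must produce $\alpha\in\mathbf{M}_x$ with $U_\alpha\subset O$. The difficulty is intrinsic to the union form of $U_\alpha$, since $\bigcup_i D^i_{\alpha_i}\subset O$ forces \emph{every} block $D^i_{\alpha_i}$ to lie in $O$, whereas the $cn$-network hypothesis only guarantees that the single set $\bigcup\{N_n:x\in N_n\subset O\}$ is a neighborhood, not that finite intersections starting at every index $i$ can be driven inside $O$. Resolving this is where the real work lies: one has to choose the coding of the blocks (using the finite-intersection closure) so that, for each neighborhood $O$, the canonical neighborhood $\bigcup\{N_n:x\in N_n\subset O\}$ is itself realized as some $U_\alpha$ with $\alpha\in\mathbf{M}_x$, while preserving the decreasing and downward-closed structure and condition $(\mathbf{D})$ already arranged. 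I expect this simultaneous reconciliation of the four demands --- neighborhood base, $\mathbf{M}_x$-decreasing, $\mathbf{M}_x$ downward closed, and $(\mathbf{D})$ --- to be the crux, with the $cn$-network property entering precisely at the cofinality step.
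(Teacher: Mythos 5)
The paper gives no proof of this lemma at all --- it is imported verbatim from \cite{GK} --- so your attempt can only be measured against the standard argument from that source. Measured that way, your easy half is complete and correct: for the backward implication, $x\in DM_{k}(\alpha)\subset U_{\alpha}\subset O_{x}$, countability via the finite initial segments determining $I_{k}(\alpha)\cap\mathbf{M}_{x}$, and condition $(\mathbf{D})$ giving that the union is a neighborhood, are all exactly right, as are your verification that $DM_{k}(\alpha)\supseteq\bigcup_{i\leq k}D^{i}_{\alpha_{i}}$ (hence $(\mathbf{D})$ holds for the block construction) and the downward closure of $\mathbf{M}_{x}$. The genuine gap is the one you flag and then leave open: you never actually produce, for a given neighborhood $O$ of $x$, an index $\alpha\in\mathbf{M}_{x}$ with $U_{\alpha}\subset O$. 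That cofinality step is the \emph{only} place in the forward direction where the $cn$-network hypothesis is used, so as written the hard half is not proved. Moreover, the device you propose for closing it --- passing to the closure of $\mathcal{N}$ under finite intersections --- does not address the real obstruction. The problem is about the enumeration, not about which sets are available: for a block $D^{i}_{\alpha_{i}}=N_{i}\cap\cdots\cap N_{i+\alpha_{i}-1}$ with large starting index $i$ to be forced into $O$, you need some index $j\geq i$ with $N_{j}\subset O$, and closing $\mathcal{N}$ under intersections does nothing to guarantee that such indices occur arbitrarily late in the list.

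The correct fix is an enumeration trick. Re-list $\mathcal{N}$ so that every member occurs infinitely often (and, as you already arranged, contains $x$). For a neighborhood $O$ of $x$, the set $J=\{n:N_{n}\subset O\}$ is nonempty, since otherwise the canonical union $\bigcup\{N\in\mathcal{N}:x\in N\subset O\}$ would be empty and hence not a neighborhood of $x$; by the infinite repetition, $J$ is then infinite. Put $j_{i}=\min\{j\in J:j\geq i\}$ and $\alpha_{i}=j_{i}-i+1$. Then $D^{i}_{\alpha_{i}}\subset N_{j_{i}}\subset O$ for every $i$, while $D^{j}_{\alpha_{j}}=N_{j}$ for every $j\in J$, so in fact $U_{\alpha}=\bigcup_{n\in J}N_{n}$ is precisely the canonical neighborhood supplied by the $cn$-network property; in particular $U_{\alpha}$ is a neighborhood of $x$, so $\alpha\in\mathbf{M}_{x}$, and $U_{\alpha}\subset O$. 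Inserting this paragraph closes the gap and turns your outline into a complete proof, essentially the one in \cite{GK}.
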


\begin{theorem}
Let $G$ be a Baire topological gyrogroup. Then the following are equivalent:

(i) $G$ is metrizable.

\smallskip
(ii) $G$ has the strong Pytkeev property.

\smallskip
(iii) $G$ has countable $ck$-character.

\smallskip
(iv) $G$ has countable $cn$-character.

\smallskip
(v) $G$ has an $\omega^{\omega}$-base satisfying the condition $(\mathbf{D})$.
\end{theorem}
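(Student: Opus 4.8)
The plan is to prove the theorem as a cycle of implications $(i)\Rightarrow(iii)\Rightarrow(ii)\Rightarrow(iv)\Leftrightarrow(v)\Rightarrow(i)$, arranged so that the whole arc $(i)\Rightarrow\cdots\Leftrightarrow(v)$ is valid in \emph{every} topological gyrogroup and only the closing implication $(v)\Rightarrow(i)$ consumes the Baire hypothesis. Two facts will be used repeatedly: every topological gyrogroup is homogeneous, so it suffices to produce the relevant countable networks and bases at the identity $0$; and, by the Cai--Lin--He theorem \cite{CZ} recalled in the introduction, a topological gyrogroup is metrizable if and only if it is first countable. Thus the substance of the theorem is the passage from a countable local network to a countable neighbourhood base at $0$, and this is exactly what the Baire property will buy.

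For the forward arc I would argue as follows. For $(i)\Rightarrow(iii)$, fix a compatible metric and a countable dense set $Q\subset G$ and put $\mathcal{N}=\{B(0,1/n)\cup B(q,1/m):q\in Q,\ n,m\in\mathbb{N}\}$; every member contains $0$, and any compact $K\subset O$ is covered by finitely many rational balls lying in $O$, so adjoining a single $B(0,1/n)\subset O$ exhibits the finite subfamily required by the definition of a $ck$-network. Hence $\mathcal{N}$ is a countable $ck$-network at $0$. The steps $(iii)\Rightarrow(ii)$ and $(ii)\Rightarrow(iv)$ are the standard links of the local-network hierarchy: a countable $ck$-network is a Pytkeev network and therefore witnesses the strong Pytkeev property, while the strong Pytkeev property yields a countable Pytkeev network, which in particular is a countable $cn$-network \cite{GKS,GKL1}. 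Finally $(iv)\Leftrightarrow(v)$ is Lemma \ref{1yl2} read at $0$: a countable $cn$-network at $0$ is equivalent to a small base (equivalently, an $\omega^{\omega}$-base) at $0$ satisfying condition $(\mathbf{D})$, and by homogeneity the local equivalence upgrades to the equality of the global characters in $(iv)$ and $(v)$.

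It remains to close the cycle by proving $(v)\Rightarrow(i)$, and here the Baire assumption enters. Since every topological gyrogroup is $T_3$, I may assume the small base $\{U_{\alpha}:\alpha\in\mathbf{M}\}$ consists of closed sets, so each $DM_{k}(\alpha)=\bigcap_{\beta\in I_{k}(\alpha)\cap\mathbf{M}}U_{\beta}$ is closed; by $(\mathbf{D})$ we have, for every $\alpha$, an increasing representation $U_{\alpha}=\bigcup_{k\in\mathbb{N}}DM_{k}(\alpha)$ (recall $DM_{k}(\alpha)\subset DM_{k+1}(\alpha)$). Fix $\alpha$, choose open $W$ with $0\in W\subset U_{\alpha}$, and write $W=\bigcup_{k}\bigl(W\cap DM_{k}(\alpha)\bigr)$ as a countable increasing union of sets closed in the open, hence Baire, subspace $W$. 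By the Baire category theorem some $DM_{k_{\alpha}}(\alpha)$ has nonempty interior in $G$.

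The genuinely delicate step is to convert ``nonempty interior'' into a neighbourhood base at $0$, since $0$ itself need not lie in the interior of $DM_{k_{\alpha}}(\alpha)$; this is the one place where the weak (gyro)associativity of $G$ must be handled with care, and I regard it as the main obstacle. I would adapt the classical group identity asserting that $C^{-1}C$ is a neighbourhood of the identity whenever $\mathrm{int}\,C\neq\emptyset$. Concretely, if $p\in\mathrm{int}\,DM_{k_{\alpha}}(\alpha)$ then left translation (a homeomorphism) shows that $\ominus p\oplus DM_{k_{\alpha}}(\alpha)$, and hence the set $\ominus DM_{k_{\alpha}}(\alpha)\oplus DM_{k_{\alpha}}(\alpha)=\{\ominus c\oplus c':c,c'\in DM_{k_{\alpha}}(\alpha)\}$, is a neighbourhood of $0$; the gyration factors produced by the left cancellation law of Lemma \ref{a} are absorbed because each $\mathrm{gyr}[a,b]$ is a continuous automorphism fixing $0$. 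To control size, given a target $U_{\gamma}$ I use joint continuity of $\oplus$ and continuity of $\ominus$ to pick a base member $U_{\delta}$ with $\ominus U_{\delta}\oplus U_{\delta}\subset U_{\gamma}$, apply the Baire step at $\delta$, and obtain $\ominus DM_{k_{\delta}}(\delta)\oplus DM_{k_{\delta}}(\delta)\subset\ominus U_{\delta}\oplus U_{\delta}\subset U_{\gamma}$. Since the family $\{DM_{k}(\alpha):k\in\mathbb{N},\ \alpha\in\mathbf{M}\}$ is countable, the countable family $\{\ominus DM_{k}(\alpha)\oplus DM_{k}(\alpha):DM_{k}(\alpha)\text{ has nonempty interior}\}$ is then a neighbourhood base at $0$. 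Therefore $G$ is first countable, and by \cite{CZ} it is metrizable, which closes the cycle and proves the theorem.
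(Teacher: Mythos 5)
Your closing step is sound and is essentially the paper's own argument: the paper proves (iv) $\Rightarrow$ (i) by using Lemma \ref{1yl2} to get a small base $\{U_{\alpha}:\alpha\in\mathbf{M}\}$ satisfying $(\mathbf{D})$, applying the Baire property to find $k$ with $\mathrm{Int}(U_{\alpha})\cap\overline{DM_{k}(\alpha)}$ of nonempty interior, and then using the translation trick to make $\overline{DM_{k}(\alpha)}\oplus\overline{(\ominus DM_{k}(\alpha))}$ a neighborhood of $0$ inside the target; your $\ominus DM_{k}(\delta)\oplus DM_{k}(\delta)$ is the same idea. (Two small repairs there: you cannot simply ``assume the small base consists of closed sets,'' since replacing $U_{\alpha}$ by $\overline{U_{\alpha}}$ need not preserve condition $(\mathbf{D})$ --- instead take closures of the $DM_{k}(\alpha)$ inside the open set at the moment you invoke Baire, as the paper does; and Lemma \ref{1yl2} produces only a \emph{small} base indexed by some $\mathbf{M}\subset\mathbb{N}^{\mathbb{N}}$, not a full $\omega^{\omega}$-base, so it gives (v) $\Rightarrow$ (iv) but not (iv) $\Rightarrow$ (v) --- run your Baire argument from the small base, i.e. prove (iv) $\Rightarrow$ (i) directly, and get (i) $\Rightarrow$ (v) separately by the trivial construction $U_{\alpha}=V_{\alpha_{1}}$.)

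The genuine gaps are in your forward arc, and both of its first two links are broken. First, (i) $\Rightarrow$ (iii): your $ck$-network $\{B(0,1/n)\cup B(q,1/m):q\in Q\}$ requires a countable dense set $Q$, but a metrizable gyrogroup need not be separable, so this construction simply does not exist in general. Second, (iii) $\Rightarrow$ (ii): you assert that ``a countable $ck$-network is a Pytkeev network,'' but the standard hierarchy runs the other way --- every countable $cp$-network is a $ck$-network (this is exactly the fact from \cite{BTL} the paper cites), and the converse is not a known general implication; whether countable $ck$-character implies the strong Pytkeev property is, for topological groups, not something you can quote. Both gaps disappear if you reorient the arc as the paper does: (i) $\Rightarrow$ (ii) is immediate because a countable decreasing base $\{V_{n}\}$ at $0$ is itself a witness for the strong Pytkeev property (if $V_{n}\subset U$ and $V_{n}\cap A$ were finite for $0\in\overline{A}\setminus A$, then $V_{n}$ minus that finite set would be a neighborhood of $0$ missing $A$); then (ii) $\Rightarrow$ (iii) is the legitimate direction $cp\Rightarrow ck$ from \cite{BTL}, and (iii) $\Rightarrow$ (iv) is trivial since every $ck$-network is a $cn$-network. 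With that rearrangement your proof becomes the paper's.
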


\begin{proof}
The implications (i) $\Rightarrow$ (ii) and (iii) $\Rightarrow$ (iv) are trivial. Moreover, (ii) $\Rightarrow$ (iii) directly follows from the fact that every countable $cp$-network at a point $x$ of a topological space $X$ is a $ck$-network at $x$ \cite{BTL}. Then (v) $\Rightarrow$ (iv) follows from Lemma \ref{1yl2}. Then we show that (i) $\Rightarrow$ (v) and (iv) $\Rightarrow$ (i).

(i) $\Rightarrow$ (v) If $\{V_{n}\}_{n\in \mathbb{N}}$ is a decreasing base of neighborhoods at the identity element $0$ of $G$, the family $\{U_{\alpha}:\alpha \in \mathbb{N}^{\mathbb{N}}\}$, where $U_{\alpha}=V_{\alpha_{1}}$ for $\alpha =(\alpha_{i})\in \mathbb{N}^{\mathbb{N}}$, is an $\omega^{\omega}$-base and satisfies the condition ($\mathbf{D}$).

(iv) $\Rightarrow$ (i) Suppose that $G$ has a countable $cn$-character and we claim that $G$ is first-countable. It follows from Lemma \ref{1yl2} that there is a small local base $\mathcal{U}=\{U_{\alpha}:\alpha \in \mathbf{M}\}$ at $0$ which satisfies the condition $(\mathbf{D})$. For an arbitrary open neighborhood $W$ of $0$, choose a symmetric open neighborhood $V$ of $0$ such that $V\oplus V\subset \overline{V}\oplus \overline{V}\subset W$. Then, we can find $\alpha \in \mathbf{M}$ with $U_{\alpha}=\bigcup_{k}DM_{k}(\alpha)\subset V$ and $Int(U_{\alpha})$ is open in $G$. Moreover, it follows from the Baire property of $G$ that there is $k\in \mathbb{N}$ such that $Int(U_{\alpha})\cap \overline{DM_{k}(\alpha)}$ has a non-empty interior in $U_{\alpha}$. Therefore, $Int(U_{\alpha})\cap \overline{DM_{k}(\alpha)}$ has a non-empty interior in $G$ and hence $\overline{DM_{k}(\alpha)}\oplus \overline{(\ominus DM_{k}(\alpha))}$  is a countable neighborhood of the identity element $0$ which is contained in $W$. Furthermore, since $G$ is homogeneous, $G$ is first-countable. Then, every first-countable topological gyrogroup is metrizable, so $G$ is metrizable.
\end{proof}

\section{Strongly topological gyrogroups with strong countable completeness}
In this Section, we claim that if $G$ is a strongly countably complete strongly topological gyrogroup, then $G$ contains a closed, countably compact, admissible subgyrogroup $P$ such that the quotient space $G/P$ is metrizable and the canonical homomorphism $\pi :G\rightarrow G/P$ is closed.

A space $X$ is called {\it strongly countably complete} \cite{FZ} if there exists a sequence $\{\gamma_{n}:n\in \mathbb{N}\}$ of open covering of $X$ such that every decreasing sequence $\{F_{n}:n\in \mathbb{N}\}$ of nonempty closed sets in $X$ has nonempty intersection provided each $F_{n}$ is contained in some element of $\gamma_{n}$.

\begin{definition}\cite{BL}\label{d11}
Let $G$ be a topological gyrogroup. We say that $G$ is a {\it strongly topological gyrogroup} if there exists a neighborhood base $\mathscr U$ of $0$ such that, for every $U\in \mathscr U$, $\mbox{gyr}[x, y](U)=U$ for any $x, y\in G$. For convenience, we say that $G$ is a strongly topological gyrogroup with neighborhood base $\mathscr U$ of $0$.
\end{definition}

Clearly, we may assume that $U$ is symmetric for each $U\in\mathscr U$ in Definition~\ref{d11}. Moreover, in the
classical M\"{o}bius, Einstein or Proper Velocity gyrogroups, we know that gyrations are indeed special rotations. However, for an arbitrary gyrogroup, gyrations belong to the automorphism group of $G$ and need not be necessarily rotations.

In \cite{BL}, the authors proved that there is a strongly topological gyrogroup which is not a topological group, see Example \ref{lz1}.

\begin{example}\cite{BL}\label{lz1}
Let $\mathbb{D}$ be the complex open unit disk $\{z\in \mathbb{C}:|z|<1\}$. We consider $\mathbb{D}$ with the standard topology. In \cite[Example 2]{AW}, define a M\"{o}bius addition $\oplus _{M}: \mathbb{D}\times \mathbb{D}\rightarrow \mathbb{D}$ to be a function such that $$a\oplus _{M}b=\frac{a+b}{1+\bar{a}b}\ \mbox{for all}\ a, b\in \mathbb{D}.$$ Then $(\mathbb{D}, \oplus _{M})$ is a gyrogroup, and it follows from \cite[Example 2]{AW} that $$gyr[a, b](c)=\frac{1+a\bar{b}}{1+\bar{a}b}c\ \mbox{for any}\ a, b, c\in \mathbb{D}.$$ For any $n\in \mathbb{N}$, let $U_{n}=\{x\in \mathbb{D}: |x|\leq \frac{1}{n}\}$. Then, $\mathscr U=\{U_{n}: n\in \mathbb{N}\}$ is a neighborhood base of $0$. Moreover, we observe that $|\frac{1+a\bar{b}}{1+\bar{a}b}|=1$. Therefore, we obtain that $gyr[x, y](U)\subset U$, for any $x, y\in \mathbb{D}$ and each $U\in \mathscr U$, then it follows that $gyr[x, y](U)=U$ by \cite[Proposition 2.6]{ST}. Hence, $(\mathbb{D}, \oplus _{M})$ is a strongly topological gyrogroup. However, $(\mathbb{D}, \oplus _{M})$ is not a group \cite[Example 2]{AW}.
\end{example}

\bigskip
{\bf Remark.} Even though M\"{o}bius gyrogroups, Einstein gyrogroups, and Proper Velocity gyrogroups are all strongly topological gyrogroups, all of them do not possess any non-trivial $L$-subgyrogroups. However, there is a class of strongly topological gyrogroups which has a non-trivial $L$-subgyrogroup, see Example~\ref{e1}.

\begin{example}\label{e1}\cite{BL}
There exists a strongly topological gyrogroup which has an infinite $L$-subgyrogroup.
\end{example}

\smallskip
Indeed, let $X$ be an arbitrary feathered non-metrizable topological group, and let $Y$ be an any strongly topological gyrogroup with a non-trivial $L$-subgyrogroup (such as the gyrogroup $K_{16}$ \cite[p. 41]{UA2002}). Put $G=X\times Y$ with the product topology and the operation with coordinate. Then $G$ is an infinite strongly topological gyrogroup since $X$ is infinite. Let $H$ be a non-trivial $L$-subgyrogroup of $Y$, and take an arbitrary infinite subgroup $N$ of $X$. Then $N\times H$ is an infinite $L$-subgyrogroup of $G$.

\bigskip
Then, we recall the following concept of the coset space of a topological gyrogroup.

Let $(G, \tau, \oplus)$ be a topological gyrogroup and $H$ an $L$-subgyrogroup of $G$. It follows from \cite[Theorem 20]{ST} that $G/H=\{a\oplus H:a\in G\}$ is a partition of $G$. We denote by $\pi$ the mapping $a\mapsto a\oplus H$ from $G$ onto $G/H$. Clearly, for each $a\in G$, we have $\pi^{-1}\{\pi(a)\}=a\oplus H$.
Denote by $\tau (G)$ the topology of $G$. In the set $G/H$, we define a family $\tau (G/H)$ of subsets as follows: $$\tau (G/H)=\{O\subset G/H: \pi^{-1}(O)\in \tau (G)\}.$$

The following concept of an admissible subgyrogroup of a strongly topological gyrogroup was first introduced in \cite{BL1}.

A subgyrogroup $H$ of a topological gyrogroup $G$ is called {\it admissible} if there exists a sequence $\{U_{n}:n\in \omega\}$ of open symmetric neighborhoods of the identity $0$ in $G$ such that $U_{n+1}\oplus (U_{n+1}\oplus U_{n+1})\subset U_{n}$ for each $n\in \omega$ and $H=\bigcap _{n\in \omega}U_{n}$. If $G$ is a strongly topological gyrogroup with a symmetric neighborhood base $\mathscr U$ at $0$ and each $U_{n}\in \mathscr U$, we say that the admissible topological subgyrogroup is generated from $\mathscr U$.

\begin{lemma}\label{1yl1}\cite{BL2}
Suppose that $(G, \tau, \oplus)$ is a strongly topological gyrogroup with a symmetric neighborhood base $\mathscr U$ at $0$. Then each admissible topological subgyrogroup $H$ generated from $\mathscr U$ is a closed $L$-subgyrogroup of $G$.
\end{lemma}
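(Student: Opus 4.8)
The plan is to verify three facts about $H=\bigcap_{n\in\omega}U_{n}$, where $\{U_n:n\in\omega\}\subset\mathscr U$ is the defining sequence with $U_{n+1}\oplus(U_{n+1}\oplus U_{n+1})\subset U_n$: that $H$ is a subgyrogroup, that it is an $L$-subgyrogroup, and that it is closed. Of these, only the last is genuinely delicate, and it is the only place where the word \emph{strongly} is used in an essential way. I begin with the algebraic structure. Since $0\in U_n$ for all $n$, $H\neq\emptyset$; for $a\in H$ the symmetry of each $U_n$ gives $\ominus a\in U_n$ for all $n$, so $\ominus a\in H$. For $a,b\in H$ and any fixed $n$ we have $a,b,0\in U_{n+1}$, whence $a\oplus b=a\oplus(b\oplus 0)\in U_{n+1}\oplus(U_{n+1}\oplus U_{n+1})\subset U_n$; as $n$ is arbitrary, $a\oplus b\in H$. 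Thus $H$ contains $0$ and is closed under $\oplus$ and $\ominus$.

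Next I treat the gyration condition, which simultaneously finishes the subgyrogroup check and yields the $L$-property. Because $G$ is strongly topological with base $\mathscr U\ni U_n$, we have $\mbox{gyr}[x,y](U_n)=U_n$ for all $x,y\in G$ and all $n$. Each $\mbox{gyr}[x,y]$ is a bijection and therefore commutes with intersections, so $\mbox{gyr}[x,y](H)=\mbox{gyr}[x,y]\big(\bigcap_{n}U_n\big)=\bigcap_{n}\mbox{gyr}[x,y](U_n)=H$ for all $x,y\in G$. Restricting to $x,y\in H$ shows that $\mbox{gyr}[a,b]|_{H}$ is an automorphism of $H$, so $H$ is a subgyrogroup in the sense of the paper's definition; taking instead $y=h\in H$ with $x=a\in G$ arbitrary gives exactly $\mbox{gyr}[a,h](H)=H$, i.e. $H\leq_{L}G$.

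The main obstacle is closedness, and I would reduce it to the claim $\overline{U_{n+1}}\subset U_n$ for every $n$: granting this, $\overline{H}\subset\overline{U_{n+1}}\subset U_n$ for all $n$ forces $\overline{H}\subset\bigcap_{n}U_n=H$. To prove the claim, fix $x\in\overline{U_{n+1}}$. Left translation $L_x\colon y\mapsto x\oplus y$ is a homeomorphism, its inverse being $L_{\ominus x}$ by the left cancellation law (Lemma \ref{a}(1)), so $x\oplus U_{n+1}$ is an open neighborhood of $x=x\oplus 0$ and hence meets $U_{n+1}$: there are $u,u'\in U_{n+1}$ with $x\oplus u=u'$. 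Applying the right cancellation law (Lemma \ref{a}(2) with $y=\ominus u$) gives $x=(x\oplus u)\oplus\mbox{gyr}[x,u](\ominus u)=u'\oplus\mbox{gyr}[x,u](\ominus u)$. Here $\ominus u\in U_{n+1}$ by symmetry, and—this is the crucial use of the strong hypothesis—$\mbox{gyr}[x,u](U_{n+1})=U_{n+1}$ yields $\mbox{gyr}[x,u](\ominus u)\in U_{n+1}$, so $x\in U_{n+1}\oplus U_{n+1}\subset U_n$ (again writing the second factor as $w\oplus 0$ and invoking the defining inclusion). I expect this step to be the hard part precisely because it breaks down for a general topological gyrogroup: without the invariance $\mbox{gyr}[x,u](U_{n+1})=U_{n+1}$ the gyration could carry $\ominus u$ outside $U_{n+1}$ and the estimate $x\in U_{n+1}\oplus U_{n+1}$ would fail. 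Everything else is bookkeeping with the nesting condition and the cancellation laws of Lemma \ref{a}.
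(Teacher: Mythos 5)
Your proof is correct, and every step checks out: the closure of $H=\bigcap_{n}U_{n}$ under $\oplus$ and $\ominus$, the identity $\mbox{gyr}[x,y](H)=\bigcap_{n}\mbox{gyr}[x,y](U_{n})=H$ for all $x,y\in G$ (which gives both the subgyrogroup and the $L$-subgyrogroup conditions), and the key estimate $\overline{U_{n+1}}\subset U_{n}$ obtained from right cancellation (Lemma~\ref{a}) together with the gyration-invariance of $U_{n+1}$, which yields closedness. Note that the paper itself gives no proof of this lemma --- it is quoted from \cite{BL2} --- and your argument is essentially the same as the one in that cited source, correctly isolating the one place (the closedness estimate) where the \emph{strongly} topological hypothesis is indispensable.
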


\begin{lemma}\label{s}\cite{BL}
Let $G$ be a strongly topological gyrogroup with the symmetric neighborhood base $\mathscr{U}$ at $0$, and let $\{U_{n}: n\in\omega\}$ and $\{V(m/2^{n}): n, m\in\omega\}$ be two sequences of open neighborhoods satisfying the following conditions (1)-(5):

\smallskip
(1) $U_{n}\in\mathscr{U}$ for each $n\in \omega$.

\smallskip
(2) $U_{n+1}\oplus U_{n+1}\subset U_{n}$, for each $n\in \omega$.

\smallskip
(3) $V(1)=U_{0}$;

\smallskip
(4) For any $n\geq 1$, put $$V(1/2^{n})=U_{n}, V(2m/2^{n})=V(m/2^{n-1})$$ for $m=1,...,2^{n-1}$, and $$V((2m+1)/2^{n})=U_{n}\oplus V(m/2^{n-1})=V(1/2^{n})\oplus V(m/2^{n-1})$$ for each $m=1,...,2^{n-1}-1$;

\smallskip
(5) $V(m/2^{n})=G$ when $m>2^{n}$;

\smallskip
Then there exists a prenorm $N$ on $G$ satisfies the following conditions:

\smallskip
(a) for any fixed $x, y\in G$, we have $N(\mbox{gyr}[x,y](z))=N(z)$ for any $z\in G$;

\smallskip
(b) for any $n\in\omega$, $$\{x\in G: N(x)<1/2^{n}\}\subset U_{n}\subset\{x\in G: N(x)\leq 2/2^{n}\}.$$
\end{lemma}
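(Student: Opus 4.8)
The plan is to adapt the classical Birkhoff--Kakutani prenorm construction to the gyrogroup setting; the only genuinely new ingredient is that the gyroassociative law forces a re-bracketing of iterated $\oplus$-products, and this is controlled precisely by the hypothesis that $G$ is \emph{strongly} topological. Recall that a prenorm on $(G,\oplus)$ is a map $N:G\to[0,+\infty)$ with $N(0)=0$, $N(\ominus x)=N(x)$ and $N(x\oplus y)\le N(x)+N(y)$. The first step is to prove, by induction on $n$ along the recursion in (4), that $\mbox{gyr}[x,y]\bigl(V(m/2^{n})\bigr)=V(m/2^{n})$ for all $x,y\in G$ and all admissible $m,n$. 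The base sets $U_{n}\in\mathscr U$ are symmetric and satisfy $\mbox{gyr}[x,y](U_{n})=U_{n}$ by Definition~\ref{d11}. Since every $\mbox{gyr}[x,y]$ is an automorphism of $(G,\oplus)$, it commutes with $\oplus$ on sets, so $\mbox{gyr}[x,y](A\oplus B)=\mbox{gyr}[x,y](A)\oplus\mbox{gyr}[x,y](B)=A\oplus B$ whenever $A,B$ are $\mbox{gyr}$-invariant; as $V((2m+1)/2^{n})=U_{n}\oplus V(m/2^{n-1})$, invariance propagates up the recursion. This is exactly where the strongly topological hypothesis (rather than mere continuity) is indispensable; for a general topological gyrogroup it would fail.

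\textbf{The shift inclusion.} Next I would establish the key inclusion
\[
U_{n}\oplus V(m/2^{n})\subset V\bigl((m+1)/2^{n}\bigr)\qquad(n\ge 1,\ m\ge 0),
\]
with the conventions $V(0)=\{0\}$ and $V(r)=G$ for $r>1$, by induction on $n$. For even $m=2j$ this is the defining equality $U_{n}\oplus V(j/2^{n-1})=V((2j+1)/2^{n})$ from (4). For odd $m=2j+1$ one has $V(m/2^{n})=U_{n}\oplus V(j/2^{n-1})$, and for $a,u\in U_{n}$ and $v\in V(j/2^{n-1})$ the gyroassociative law (G3) gives $a\oplus(u\oplus v)=(a\oplus u)\oplus\mbox{gyr}[a,u](v)$; by the first step the gyrated point $\mbox{gyr}[a,u](v)$ stays in $V(j/2^{n-1})$, so
\[
U_{n}\oplus\bigl(U_{n}\oplus V(j/2^{n-1})\bigr)\subset (U_{n}\oplus U_{n})\oplus V(j/2^{n-1})\subset U_{n-1}\oplus V(j/2^{n-1})\subset V\bigl((j+1)/2^{n-1}\bigr),
\]
where the middle inclusion is condition (2) and the last is the inductive hypothesis; and $V((j+1)/2^{n-1})=V((m+1)/2^{n})$. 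Since $0\in U_{n}$, the shift inclusion also yields the monotonicity $V(r)\subset V(s)$ for dyadic $r\le s$.

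\textbf{The prenorm.} I would then set
\[
N(x)=\inf\Bigl\{\,\textstyle\sum_{i=1}^{k}r_{i}\ :\ x\in V(r_{1})\oplus\bigl(V(r_{2})\oplus(\cdots\oplus V(r_{k}))\bigr),\ r_{i}\ \text{dyadic}\Bigr\},
\]
the infimum of the total radius over all left-nested products of basic sets containing $x$ (and $N(x)=1$ if no such product of total $\le 1$ does). Then $N\ge 0$ and $N(0)=0$. For subadditivity I would concatenate a left-nested decomposition of $x$ with one of $y$ and repeatedly apply (G3) to push all brackets to the left; because every factor set is $\mbox{gyr}$-invariant, the gyrations thrown off by each re-bracketing leave the factor sets unchanged, so $x\oplus y$ again lies in a left-nested product of the same total radius, giving $N(x\oplus y)\le N(x)+N(y)$. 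Symmetry $N(\ominus x)=N(x)$ follows similarly from the gyrosum inversion law $\ominus(a\oplus b)=\mbox{gyr}[a,b](\ominus b\oplus\ominus a)$ together with the symmetry of the $U_{n}$ and the invariance from the first step: a decomposition of $x$ is turned into one of $\ominus x$ with the factors reversed but the total radius unchanged. Property (a) is immediate, since $\mbox{gyr}[x,y]$ is an automorphism fixing each $V(r_{i})$, hence carries any admissible decomposition of $z$ to one of $\mbox{gyr}[x,y](z)$ with identical radii, so $N(\mbox{gyr}[x,y](z))=N(z)$.

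\textbf{Property (b) and the main obstacle.} The two inclusions in (b) follow from the comparison between $N$ and the gauge $f(x)=\inf\{r:x\in V(r)\}$. The one-term decomposition gives $N\le f$, so $x\in U_{n}=V(1/2^{n})$ forces $N(x)\le 1/2^{n}\le 2/2^{n}$, which is the right-hand inclusion. For the left-hand inclusion I would use the shift inclusion to prove the standard estimate that a left-nested product of total radius $t$ lies in $V(2t)$, giving the reverse comparison $f\le 2N$; monotonicity of the $V$-family then translates a bound on $N$ into membership in the appropriate $U_{n}$, which is exactly the left-hand inclusion, the factor $2$ being the familiar artifact of the two-fold condition (2). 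The only genuinely new difficulty, compared with the group case, is the re-association of iterated $\oplus$-products: in a gyrogroup $a\oplus(b\oplus c)\ne(a\oplus b)\oplus c$ in general, so the classical manipulations do not transfer verbatim. The resolution is uniform---every re-bracketing produces a gyration, and the strongly topological hypothesis guarantees that all sets occurring in the construction are gyration-invariant, so these gyrations are harmless; verifying this invariance at each stage, and checking that the order-insensitive definition of $N$ absorbs the gyrations in the subadditivity and symmetry arguments, is the crux, while the remaining steps are the standard dyadic bookkeeping.
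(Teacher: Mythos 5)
Your handling of the gyro-specific difficulties is sound: proving $\mbox{gyr}[x,y]\bigl(V(m/2^{n})\bigr)=V(m/2^{n})$ by induction (gyrations are automorphisms fixing each $U_{n}\in\mathscr{U}$), and using this invariance to absorb the gyrations created by re-bracketing, is exactly the mechanism of the proof this paper imports from \cite{BL} (the lemma is cited there, not reproved here), and your shift inclusion $U_{n}\oplus V(m/2^{n})\subset V((m+1)/2^{n})$ is correctly stated and correctly proved. The genuine gap is in property (b). You define $N$ as a word-norm (infimum over decompositions), and your own comparison with the gauge $f(x)=\inf\{r: x\in V(r)\}$ is $N\le f\le 2N$. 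Now suppose $N(x)<1/2^{n}$: you get a decomposition of total radius $t<1/2^{n}$, hence by your chaining estimate $x\in V(2t)$ with $2t<2/2^{n}$, hence by monotonicity $x\in V(1/2^{n-1})=U_{n-1}$. So what your argument yields is $\{x\in G: N(x)<1/2^{n}\}\subset U_{n-1}$, one level short of the left-hand inclusion of (b), which demands $U_{n}$ and has \emph{no} factor-$2$ slack; the slack in (b) sits entirely on the right-hand inclusion, which your $N$ over-satisfies ($U_{n}\subset\{x: N(x)\le 1/2^{n}\}$). Your closing assertion that this ``is exactly the left-hand inclusion'' is therefore false as written: you have spent the unavoidable factor $2$ on the wrong side of (b).

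The gap is fixable, but it must be fixed explicitly. Either (i) rescale: take the final prenorm to be $2N$; then $\{x: 2N(x)<1/2^{n}\}=\{x: N(x)<1/2^{n+1}\}\subset V(1/2^{n})=U_{n}$ by chaining plus monotonicity, and $U_{n}\subset\{x: N(x)\le 1/2^{n}\}=\{x: 2N(x)\le 2/2^{n}\}$, which is precisely (b); or (ii) follow the route of the cited proof and take the gauge itself as the prenorm, $N(x)=\inf\{m/2^{n}: x\in V(m/2^{n})\}$, for which both inclusions of (b) are immediate from monotonicity, the work then migrating to subadditivity via the addition lemma $V(m/2^{n})\oplus V(m'/2^{n})\subset V((m+m'+1)/2^{n})$, proved by the same left-peeling induction and gyr-invariance you already have. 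If you keep the word-norm, two further glosses need repair: your chaining estimate (``total radius $t$ lies in $V(2t)$'') is not a direct consequence of the shift inclusion but is itself the addition-lemma induction; and in the symmetry argument the composite sets $V(m/2^{n})$, $m\ge 3$, are \emph{not} symmetric (only the $U_{n}$ are), so before applying the gyrosum inversion law you must first refine each factor $V(r_{i})$ into its constituent sets $U_{k}$, which costs nothing in total radius but is a step your sketch skips.
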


\begin{theorem}\label{3dl1}
Let $G$ be a strongly countably complete strongly topological gyrogroup with a symmetric neighborhood base $\mathscr U$. Then $G$ contains a closed, countably compact, admissible subgyrogroup $P$ such that the quotient space $G/P$ is metrizable and the canonical homomorphism $\pi :G\rightarrow G/P$ is closed.
\end{theorem}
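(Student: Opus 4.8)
The statement is the gyrogroup counterpart of the classical fact that a Čech-complete (equivalently, strongly countably complete) topological group is feathered, i.e.\ contains a compact subgroup with metrizable quotient. The plan is to imitate that argument, replacing the Birkhoff--Kakutani metrization by the prenorm of Lemma \ref{s} and using gyration invariance to absorb the failure of associativity. Let $\{\gamma_n:n\in\omega\}$ be the sequence of open covers witnessing strong countable completeness.

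First I would build, by induction, a decreasing sequence $\{U_n\}_{n\in\omega}\subset\mathscr U$ of symmetric open neighborhoods of $0$ with: (a) $U_{n+1}\oplus(U_{n+1}\oplus U_{n+1})\subset U_n$; (b) $\overline{U_{n+1}}\subset U_n$; (c) $\overline{U_n}\subset W_n$ for some $W_n\in\gamma_n$; and (d) $\mathrm{gyr}[x,y](U_n)=U_n$ for all $x,y\in G$ (automatic since $U_n\in\mathscr U$). At each step I use joint continuity of $\oplus$ to secure the cube condition (a), regularity of $G$ (recall that $T_0\Leftrightarrow T_3$ here) to shrink closures for (b)--(c), and then replace the resulting neighborhood by a smaller member of $\mathscr U$ so that (a)--(d) hold simultaneously. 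Put $P=\bigcap_n U_n$. By construction $P$ is admissible and, by Lemma \ref{1yl1}, a closed $L$-subgyrogroup; moreover $\bigcap_n\overline{U_n}=P$ by (b), and $0\in U_{n+1}$ together with (a) gives the handy inclusions $U_{n+1}\oplus U_{n+1}\subset U_n$.

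Next I would establish the two local properties of $P$. For countable compactness, let $\{F_k\}$ be a decreasing sequence of nonempty closed subsets of $P$; since $P$ is closed, each $F_k$ is closed in $G$ and $F_k\subset P\subset W_k\in\gamma_k$, so strong countable completeness forces $\bigcap_k F_k\neq\emptyset$, which is precisely countable compactness. For metrizability of $G/P$, apply Lemma \ref{s} to $\{U_n\}$ to obtain a gyration-invariant prenorm $N$ with $\{x:N(x)<1/2^n\}\subset U_n\subset\{x:N(x)\le 2/2^n\}$; in particular $N^{-1}(0)=\bigcap_n U_n=P$. Setting $d(x,y)=N(\ominus x\oplus y)$, property (a) of Lemma \ref{s} together with the gyroassociative law and the cancellation laws of Lemma \ref{a} makes $d$ a continuous pseudometric whose zero set is exactly the partition of $G$ into cosets $a\oplus P$ (symmetry and the triangle inequality rewrite $\ominus x\oplus z$ as $(\ominus x\oplus y)\oplus\mathrm{gyr}[\ominus x,y](\ominus y\oplus z)$ and then discard the gyration inside $N$). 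Hence $d$ descends to a genuine metric $\rho$ on $G/P$; that $\rho$ induces the quotient topology follows, as in Birkhoff--Kakutani, from the sandwich inequalities once one knows that the saturations $U_n\oplus P$ are cofinal among saturated open neighborhoods of $P$, a fact transported from $0$ by homogeneity of $G/P$.

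The last and most delicate point is that $\pi$ is closed. Using the criterion that $\pi$ is closed iff for every $\xi\in G/P$ and every open $U\supset\pi^{-1}(\xi)$ there is an open $V\ni\xi$ with $\pi^{-1}(V)\subset U$, and left-translating so that $\xi=\pi(0)$ and $U\supset P$, it suffices to produce a saturated open $W$ with $P\subset W\subset U$. I would take $W=U_{n}\oplus P=\bigcup_{p\in P}(U_{n}\oplus p)$, which is open (right translations are homeomorphisms) and saturated; the only thing to verify is $U_{n}\oplus P\subset U$ for large $n$. This is where the main obstacle lies: with $P$ merely countably compact rather than compact, one cannot run a tube-lemma or subsequence argument directly. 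I would therefore re-use strong countable completeness. If $(U_{n+1}\oplus P)\setminus U\neq\emptyset$ for all $n$, then, since $U_{n+1}\oplus P\subset U_{n+1}\oplus U_{n+1}\subset U_n$, the sets $F_n=\overline{U_{n+1}\oplus P}\setminus U$ form a decreasing sequence of nonempty closed sets with $F_n\subset\overline{U_n}\subset W_n\in\gamma_n$, whence $\emptyset\neq\bigcap_n F_n\subset\bigcap_n\overline{U_n}=P\subset U$, a contradiction. Thus some $U_{n+1}\oplus P\subset U$, giving the required saturated $W$ and the closedness of $\pi$. The same cofinality of $\{U_n\oplus P\}$ fills the gap left in the metrization step, completing the proof.
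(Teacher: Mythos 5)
Your proposal is correct and follows the same overall architecture as the paper: build $P=\bigcap_n U_n$ from a decreasing sequence in $\mathscr U$ satisfying the cube condition $U_{n+1}\oplus(U_{n+1}\oplus U_{n+1})\subset U_n$ with each $\overline{U_n}$ inside a member of $\gamma_n$, invoke Lemma \ref{1yl1} for closedness and admissibility, metrize $G/P$ via the gyration-invariant prenorm of Lemma \ref{s} (your $d(x,y)=N(\ominus x\oplus y)$ versus the paper's symmetrized $N(\ominus x\oplus y)+N(\ominus y\oplus x)$ is immaterial, since gyration-invariance gives $N(\ominus(\ominus x\oplus y))=N(\ominus y\oplus x)$), and derive closedness of $\pi$ from the cofinality of the $U_n$ among neighborhoods of $P$. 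Where you genuinely diverge is in how the two completeness-dependent claims are verified. The paper funnels both through a single sequential lemma (its Claim 1: any infinite set $\{x_n\}$ with $x_n\in V_n$ accumulates in $P$, proved by applying strong countable completeness to the tails $F_n=\{x_k:k\ge n\}$), and then reads off countable compactness of $P$ and the base-of-neighborhoods property (Claim 3) as corollaries. You instead apply strong countable completeness twice, directly to decreasing closed sets: to closed $F_k\subset P\subset W_k$ for countable compactness, and to $F_n=\overline{U_{n+1}\oplus P}\setminus U$ for the tube-lemma substitute. Your version is slightly cleaner --- it uses the filter characterization of countable compactness rather than accumulation points of sequences, and it makes explicit that the saturated sets $U_n\oplus P$ (not just the $U_n$) are cofinal, which is really what both the metrization and the closedness of $\pi$ need; the paper's route buys a single reusable lemma at the cost of a point-sequence argument. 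Both are sound, and I see no gap in yours.
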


\begin{proof}
Let $A$ be a $G_{\delta}$-set in $G$ containing the identity element $0$. Take a family $\lambda =\{W_{n}:n\in \mathbb{N}\}$ of open sets in $G$ such that $A=\bigcap \lambda$. Suppose that $\{\gamma_{n}:n\in \mathbb{N}\}$ is a family of open coverings of $G$ witnessing the strongly countable completeness of $G$. For each $n\in \mathbb{N}$, choose an element $U_{n}\in \gamma_{n}$ containing the identity element $0$ of $G$. Define a sequence $\{V_{n}:n\in \mathbb{N}\}\subset \mathscr U$ by induction such that $V_{0}\subset U_{0}\cap W_{0}$ and $V_{n+1}\oplus (V_{n+1}\oplus V_{n+1})\subset U_{n+1}\cap V_{n}\cap W_{n+1}$ for each $n\in \mathbb{N}$. Put $P=\bigcap_{n\in \mathbb{N}}V_{n}$. By Lemma \ref{1yl1}, $P$ is a closed admissible $L$-subgyrogroup of $G$ and $P\subset \bigcap_{n\in \mathbb{N}}W_{n}=A$.

{\bf Claim 1.} If $x_{n}\in V_{n}$ for each $n\in \mathbb{N}$ and the set $X=\{x_{n}:n\in \mathbb{N}\}$ is infinite, then $X$ has an accumulation point in $P$.

Let $X=\{x_{n}:n\in \mathbb{N}\}$. Since $\overline{V_{n+1}}\subset V_{n+1}\oplus V_{n+1}\subset V_{n}$, the definition of $P$ and the inclusion $x_{n}\in V_{n}$ for each $n\in \mathbb{N}$ together imply that all accumulation points of $X$ lie in $P$. Hence, if $X$ has no accumulation points in $P$, $X$ will be closed and discrete in $G$. Set $F_{n}=\{x_{k}:k\geq n\}$ for each $n\in \mathbb{N}$. The sets $F_{n}$ are closed in $G$ and $F_{n}\subset V_{n}\subset U_{n}\in \gamma_{n}$ for each $n\in \mathbb{N}$. However, $\bigcap_{n\in \mathbb{N}}F_{n}=\emptyset$, which is a contradiction with the choice of the family $\{\gamma_{n}:n\in \mathbb{N}\}$.

{\bf Claim 2.} $P$ is countably compact.

If $X=\{x_{n}:n\in \mathbb{N}\}$ is an infinite subset of $P$, it is clear that $x_{n}\in P \subset V_{n}\subset U_{n}$ for each $n\in \mathbb{N}$. Therefore $X$ has an accumulation point in $P$ and $P$ is countably compact.

{\bf Claim 3.} The family $\{V_{n}:n\in \mathbb{N}\}$ forms a base of neighborhoods of $P$ in $G$.

For an arbitrary open neighborhood $V$ of $P$ in $G$. If $V_{n}\setminus V\not =\emptyset$ for each $n\in \mathbb{N}$, fix points $x_{n}\in V_{n}\setminus V$. Then $X=\{x_{n}:n\in \mathbb{N}\}$ has no accumulation points in $G$, which contradicts with the Claim 1 above. Therefore, $\{V_{n}:n\in \mathbb{N}\}$ is an open neighborhood base of $P$ in $G$.

{\bf Claim 4.} The left coset space $G/P$ is metrizable.

Apply Lemma \ref{s} to choose a continuous prenorm $N$ on $G$ which satisfies $$N(gyr[x,y](z))=N(z)$$ for any $x, y, z\in G$ and $$\{x\in G: N(x)<1/2^{n}\}\subset V_{n}\subset \{x\in G: N(x)\leq 2/2^{n}\},$$ for each integer $n\geq 0$.
It is clear that $N(x)=0$ if and only if $x\in P$.

We claim that $N(x\oplus p)=N(x)$ for every $x\in G$ and $p\in P$. Indeed, for every $x\in G$ and $p\in P$, $N(x\oplus p)\leq N(x)+N(p)=N(x)+0=N(x)$.  Moreover, by the definition of $N$, we observe that $N(gyr[x, y](z))=N(z)$ for every $x, y, z\in G$. Since $H$ is a $L$-subgyrogroup, it follows from Lemma~\ref{a} that
\begin{eqnarray}
N(x)&=&N((x\oplus p)\oplus gyr[x,p](\ominus p))\nonumber\\
&\leq&N(x\oplus p)+N(gyr[x,p](\ominus p))\nonumber\\
&=&N(x\oplus p)+N(\ominus p)\nonumber\\
&=&N(x\oplus p).\nonumber
\end{eqnarray}
Therefore, $N(x\oplus p)=N(x)$ for every $x\in G$ and $p\in P$.

Now define a function $d$ from $G\times G$ to $\mathbb{R}$ by $d(x,y)=|N(x)-N(y)|$ for all $x,y\in G$. Obviously, $d$ is continuous. We show that $d$ is a pseudometric.

\smallskip
(1) For any $x, y\in G$, if $x=y$, then $d(x, y)=|N(x)-N(x)|=0$.

\smallskip
(2) For any $x, y\in G$, $d(y, x)=|N(y)-N(x)|=|N(x)-N(y)|=d(x, y)$.

\smallskip
(3) For any $x, y, z\in G$, we have
\begin{eqnarray}
d(x, y)&=&|N(x)-N(y)|\nonumber\\
&=&|N(x)-N(z)+N(z)-N(y)|\nonumber\\
&\leq&|N(x)-N(z)|+|N(z)-N(y)|\nonumber\\
&=&d(x, z)+d(z, y).\nonumber
\end{eqnarray}

If $x'\in x\oplus P$ and $y'\in y\oplus P$, there exist $p_{1},p_{2}\in P$ such that $x'=x\oplus p_{1}$ and $y'=y\oplus p_{2}$, then $$d(x', y')=|N(x\oplus p_{1})-N(y\oplus p_{2})|=|N(x)-N(y)|=d(x, y).$$ This enables us to define a function $\varrho $ on $G/P\times G/P$ by $$\varrho (\pi _{p}(x),\pi _{p}(y))=d(\ominus x\oplus y, 0)+d(\ominus y\oplus x, 0)$$ for any $x, y\in G$.

It is obvious that $\varrho $ is continuous, and we verify that $\varrho $ is a metric on $Y=G/P$.

\smallskip
(1) Obviously, for any $x, y\in G$, then
\begin{eqnarray}
\varrho (\pi _{P}(x),\pi _{P}(y))=0&\Leftrightarrow&d(\ominus x\oplus y, 0)=d(\ominus y\oplus x, 0)=0\nonumber\\
&\Leftrightarrow&N(\ominus x\oplus y)=N(\ominus y\oplus x)=0\nonumber\\
&\Leftrightarrow&\ominus x\oplus y\in P\ \mbox{and}\ \ominus y\oplus x\in P\nonumber\\
&\Leftrightarrow&y\in x+P\ \mbox{and}\ x\in y+P\nonumber\\
&\Leftrightarrow&\pi _{P}(x)=\pi _{P}(y).\nonumber
\end{eqnarray}

\smallskip
(2) For every $x,y\in G$, it is obvious that $\varrho (\pi _{P}(y), \pi _{P}(x))=\varrho (\pi _{P}(x),\pi _{P}(y))$.

\smallskip
(3) For every $x, y, z\in G$, it follows from \cite[Theorem 2.11]{UA2005} that
\begin{eqnarray}
\varrho (\pi _{P}(x),\pi _{P}(y))&=&N(\ominus x\oplus y)+N(\ominus y\oplus x)\nonumber\\
&=&N((\ominus x\oplus z)\oplus gyr[\ominus x,z](\ominus z\oplus y))\nonumber\\
&&+N((\ominus y\oplus z)\oplus gyr[\ominus y,z](\ominus z\oplus x))\nonumber\\
&\leq&N(\ominus x\oplus z)+N(gyr[\ominus x,z](\ominus z\oplus y))\nonumber\\
&&+N(\ominus y\oplus z)+N(gyr[\ominus y,z](\ominus z\oplus x))\nonumber\\
&=&N(\ominus x\oplus z)+N(\ominus z\oplus y)+N(\ominus y\oplus z)+N(\ominus z\oplus x)\nonumber\\
&=&d(\ominus x\oplus z, 0)+d(\ominus z\oplus x, 0)+d(\ominus z\oplus y, 0)+d(\ominus y\oplus z, 0)\nonumber\\
&=&\varrho (\pi _{P}(x),\pi _{P}(z))+\varrho (\pi _{P}(z),\pi _{P}(y)).\nonumber
\end{eqnarray}

Let us verify that $\varrho$ generates the quotient topology of the space $Y$. Given any points $x\in G$, $y\in Y$ and any $\varepsilon>0$, we define open balls, $$B(x, \varepsilon)=\{x'\in G: d(x',x)<\varepsilon\}$$ and $$B^{*}(y, \varepsilon)=\{y'\in G/P: \varrho (y',y)<\varepsilon\}$$ in $X$ and $Y$, respectively. Obviously, if $x\in G$ and $y=\pi _{P}(x)$, then we have $B(x, \varepsilon)=\pi ^{-1}_{P}(B^{*}(y, \varepsilon))$. Therefore, the topology generated by $\varrho$ on $Y$ is coarser than the quotient topology. Suppose that the preimage $O=\pi ^{-1}_{P}(W)$ is open in $G$, where $W$ is a non-empty subset of $Y$. For every $y\in W$, there exists $x\in G$ such that $\pi (x)=y$, then we have $\pi ^{-1}_{P}(y)=x\oplus P\subset O$. Since $\{V_{n}: n\in \omega\}$ is a base for $G$ at $P$, there exists $n\in \omega$ such that $x\oplus V_{n}\subset O$. Then there exists $\delta>0$ such that $B(x, \delta)\subset x\oplus V_{n}$. Therefore, we have $\pi ^{-1}_{P}(B^{*}(y, \delta))=B(x, \delta)\subset x\oplus V_{n}\subset O$. It follows that $B^{*}(y, \delta)\subset W$. So the set $W$ is the union of a family of open balls in $(Y, \varrho)$. Hence, $W$ is open in $(Y, \varrho)$, which proves that the metric and quotient topologies on $Y=G/P$ coincide. Therefore, the left coset space $G/P$ is metrizable.

{\bf Claim 5.} The canonical mapping $\pi :G\rightarrow G/P$ is closed.

Suppose that $F$ is a closed subset of $G$ and suppose further that $y\in Y\setminus \pi (F)$. Then, there exists a point $x\in G$ such that $\pi (x)=y$. Therefore, the coset $x\oplus P=\pi^{-1}(y)$ is disjoint from $F$, thus $O=G\setminus F$ is a neighborhood of $x\oplus P$ in $G$. Since $\{x\oplus V_{n}:n\in \mathbb{N}\}$ is a base of open neighborhood of $x\oplus P$ in $G$, we can find $n\in \mathbb{N}$ such that $x\oplus P\subset x\oplus V_{n}\subset O$. Therefore, $\pi (x\oplus V_{n+1})$ is an open neighborhood of $y$ disjoint from $\pi (F)$, so $\pi (F)$ is closed in $Y$. Hence, the canonical mapping $\pi$ is closed.
\end{proof}

\begin{corollary}\label{3tl1}
Every strongly countably complete (locally) pseudocompact strongly topological gyrogroup $G$ contains a closed countably compact $L$-subgyrogroup $H$ such that the quotient space $G/H$ is metrizable and (locally) compact, and the canonical mapping $\pi$ is closed.
\end{corollary}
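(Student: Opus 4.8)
The plan is to read off most of the statement directly from Theorem \ref{3dl1} and then upgrade ``metrizable'' to ``(locally) compact'' using the metrizability together with two further properties of the canonical map $\pi$. First I would apply Theorem \ref{3dl1} to the strongly countably complete strongly topological gyrogroup $G$ (fixing any symmetric neighborhood base $\mathscr U$) to produce a closed, countably compact, admissible subgyrogroup $P$ with $G/P$ metrizable and $\pi:G\to G/P$ closed; by Lemma \ref{1yl1} this $P$ is an $L$-subgyrogroup, so I may take $H=P$. The only thing left is to show that $G/P$ is compact when $G$ is pseudocompact, and locally compact when $G$ is locally pseudocompact. The whole point will be that, for a metrizable space, pseudocompactness and compactness coincide, so it suffices to transport (local) pseudocompactness from $G$ to $G/P$.

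The transport requires one more observation, namely that $\pi$ is open as well as closed. Since the topology on $G/P$ is the quotient topology and left and right translations in a topological gyrogroup are homeomorphisms, for every open $U\subset G$ one has $\pi^{-1}(\pi(U))=U\oplus P=\bigcup_{p\in P}(U\oplus p)$, a union of open sets; hence $\pi(U)$ is open and $\pi$ is an open map. With this in hand the pseudocompact case is immediate: $G/P$ is a continuous image of the pseudocompact space $G$ under the surjection $\pi$, hence pseudocompact, and a pseudocompact metrizable space is compact.

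For the locally pseudocompact case I would argue pointwise. Given $y\in G/P$, choose $x\in G$ with $\pi(x)=y$ and, by local pseudocompactness, an open neighborhood $U$ of $x$ whose closure $\overline U$ is pseudocompact. Then $\pi(U)$ is an open neighborhood of $y$, and $\pi(\overline U)$ is pseudocompact as a continuous image of $\overline U$. The key step is that $\pi(\overline U)$ is also \emph{closed}, because $\pi$ is closed; consequently $\overline{\pi(U)}\subset\pi(\overline U)$. Since $G/P$ is metrizable, the subspace $\pi(\overline U)$ is metrizable and pseudocompact, hence compact, and the closed subset $\overline{\pi(U)}$ of the compact set $\pi(\overline U)$ is itself compact. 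Thus every point of $G/P$ has a neighborhood with compact closure, so $G/P$ is locally compact.

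The main obstacle is conceptual rather than computational: a closed subspace of a pseudocompact space need not be pseudocompact, so one cannot directly conclude that $\overline{\pi(U)}$ is pseudocompact. The metrizability supplied by Theorem \ref{3dl1} is exactly what removes this difficulty, since it converts pseudocompactness into compactness, for which closed subspaces behave well; the simultaneous use of both the openness and the closedness of $\pi$ is what makes the localization go through.
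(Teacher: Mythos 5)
Your proposal is correct and follows essentially the same route as the paper: take $H=P$ from Theorem \ref{3dl1}, observe that $\pi$ is an open continuous surjection so that $G/P$ inherits (local) pseudocompactness, and then use the fact that a metrizable (locally) pseudocompact space is (locally) compact. The only difference is one of detail: the paper delegates the openness of $\pi$ and the transfer of (local) pseudocompactness to a citation of \cite[Theorem 3.7]{BL}, whereas you verify the openness via $\pi^{-1}(\pi(U))=U\oplus P$ and spell out the local case pointwise.
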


\begin{proof}
Let $H$ be a closed $L$-subgyrogroup of $G$ as the $L$-subgyrogroup $P$ in Theorem \ref{3dl1}. Then the metrizable space $G/H$ is (locally) pseudocompact as an open continuous image of (locally) pseudocompact space $G$ by \cite[Theorem 3.7]{BL}. Moreover, every metrizable (locally) pseudocompact space is (locally) compact.
\end{proof}

\begin{lemma}\label{3yl2}
Let $G$ be a topological gyrogroup, and let $H$ be a closed  countably compact $L$-subgyrogroup of $G$. If $D$ is an infinite closed discrete subsets of $G$, $\pi (D)$ is infinite in the quotient space $G/H$.
\end{lemma}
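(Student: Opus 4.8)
The plan is to argue by contradiction. Suppose that $D$ is an infinite closed discrete subset of $G$ but that $\pi(D)$ is finite in $G/H$. Since $\pi(D)$ is finite and $D$ is infinite, by the pigeonhole principle there is a single point $y\in G/H$ whose preimage under $\pi$ meets $D$ in an infinite set; that is, there is some coset $x\oplus H = \pi^{-1}(y)$ such that $D_{0}:=D\cap(x\oplus H)$ is infinite. The strategy is to transport this infinite subset of a single coset into the countably compact $L$-subgyrogroup $H$ and derive a contradiction with the fact that $D$ is closed and discrete in $G$.

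First I would use the left cancellation law (Lemma \ref{a}(1)) to identify the coset $x\oplus H$ with $H$ via the homeomorphism $z\mapsto \ominus x\oplus z$. Since left translations in a topological gyrogroup are homeomorphisms (the binary operation is jointly continuous and the inverse is continuous, and $\ominus x\oplus(x\oplus h)=h$ gives the inverse map), the set $E:=\{\ominus x\oplus d : d\in D_{0}\}$ is an infinite subset of $H$. Then I would invoke the countable compactness of $H$: since $E$ is an infinite subset of the countably compact space $H$, it has an accumulation point $p\in H$ in $H$, hence also an accumulation point in $G$. Translating back by the homeomorphism $z\mapsto x\oplus z$, the point $x\oplus p$ is an accumulation point in $G$ of $x\oplus E = D_{0}\subset D$.

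This produces an accumulation point of $D$ in $G$, directly contradicting the assumption that $D$ is closed and discrete, since a closed discrete set has no accumulation points. This completes the argument.

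The main obstacle I anticipate is the bookkeeping around the coset structure: I must be careful that left translation $z\mapsto x\oplus z$ really is a homeomorphism of $G$ (so that it carries accumulation points to accumulation points) and that the image of the countably-compact coset $x\oplus H$ behaves correctly under it. Because gyrations are automorphisms and $H$ is an \emph{$L$}-subgyrogroup, the coset $x\oplus H$ is a well-defined block of the partition $G/H$ and the cancellation laws of Lemma \ref{a} guarantee the translation maps are mutually inverse continuous bijections; the closedness and countable compactness of $H$ then do the rest. The only subtlety is ensuring that an accumulation point obtained inside the subspace $H$ remains an accumulation point in the ambient $G$, which follows because $H$ is a subspace and the translate lands in the closed set $x\oplus H$.
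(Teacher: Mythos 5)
Your proof is correct and follows essentially the same route as the paper: both arguments reduce to showing that a coset $x\oplus H$, being closed and homeomorphic to the countably compact $H$ via left translation, can meet the closed discrete set $D$ in only finitely many points, so the infinite set $D$ must hit infinitely many cosets. The paper phrases this via local finiteness of the family of singletons $\{\{d\}:d\in D\}$ restricted to a coset, while you use the equivalent accumulation-point characterization of countable compactness, but the underlying mechanism is identical.
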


\begin{proof}
Let $\mathcal{D}=\{\{d\}:d\in D\}$. Then $\mathcal{D}$ is a family of locally finite subsets of $G$. Since the $L$-subgyrogroup $x\oplus H$ is closed in $G$, we have that $\mathcal{D}|_{x\oplus H}$ is also locally finite in $x\oplus H$. It follows from the countable compactness of $H$ that $\mathcal{D}|_{x\oplus H}$ is finite. Hence, $D\cap (x\oplus H)$ is finite for all $x\in G$. Therefore, $\pi (D)$ is infinite.
\end{proof}

\begin{theorem}
Every strongly countably complete (locally) pseudocompact strongly topological gyrogroup $G$ is (locally) countably compact.
\end{theorem}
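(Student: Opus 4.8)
The plan is to prove the non-local statement by contradiction and to reduce local countable compactness to the global case by homogeneity. First I would invoke Corollary~\ref{3tl1}: since $G$ is a strongly countably complete (locally) pseudocompact strongly topological gyrogroup, it carries a closed, countably compact $L$-subgyrogroup $H$ (the subgyrogroup $P$ built in Theorem~\ref{3dl1}) for which $G/H$ is metrizable and (locally) compact and the canonical map $\pi\colon G\to G/H$ is closed. I would also carry along the sequence $\{V_{n}\}$ from the construction in Theorem~\ref{3dl1}, which forms a base of neighborhoods of $H$ in $G$ (Claim 3 there); translating by an arbitrary $x$, the family $\{x\oplus V_{n}\}$ is a base of neighborhoods of the fiber $x\oplus H=\pi^{-1}(\pi(x))$, since left translations are homeomorphisms of $G$. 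Treat the pseudocompact case first: here $G/H$ is metrizable and pseudocompact, hence compact.

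Suppose, toward a contradiction, that $G$ is not countably compact. Then $G$ contains an infinite closed discrete subset $D$. By Lemma~\ref{3yl2}, $\pi(D)$ is infinite, and since $\pi$ is closed and $D$ is closed, $\pi(D)$ is closed in $G/H$. Being an infinite closed subset of the compact metrizable space $G/H$, $\pi(D)$ has a limit point $y\in\pi(D)$, and by metrizability I may pick distinct points $y_{n}\in\pi(D)$ with $y_{n}\to y$; lifting, I choose distinct $d_{n}\in D$ with $\pi(d_{n})=y_{n}$, and I write $\pi^{-1}(y)=x\oplus H$.

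The crux is a tube-lemma argument that forces the lifts into shrinking neighborhoods of the fiber, and this is the step I expect to be the main obstacle. Fix $k$. The open set $O=x\oplus V_{k}$ contains $x\oplus H$, so $G\setminus O$ is closed and $\pi(G\setminus O)$ is closed; moreover $y\notin\pi(G\setminus O)$, because $\pi^{-1}(y)\subset O$. Hence $(G/H)\setminus\pi(G\setminus O)$ is an open neighborhood of $y$, so eventually $y_{n}$ lands in it, which gives $\pi^{-1}(y_{n})\subset O$ and in particular $d_{n}\in x\oplus V_{k}$ for all large $n$. A diagonal choice then yields indices $n_{1}<n_{2}<\cdots$ with $d_{n_{k}}\in x\oplus V_{k}$ for every $k$. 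Getting the lifts to converge to the fiber using only closedness of $\pi$ is the real point; once it is in place the rest is bookkeeping.

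Finally I would translate back to $0$ and invoke the completeness machinery. By the left cancellation law (Lemma~\ref{a}(1)), $z_{k}:=\ominus x\oplus d_{n_{k}}\in V_{k}$, and the $z_{k}$ are distinct because left translation by $\ominus x$ is injective; thus $\{z_{k}\}$ is an infinite set with $z_{k}\in V_{k}$ for each $k$. Applying Claim 1 in the proof of Theorem~\ref{3dl1} to the same $H=P$ and $\{V_{n}\}$, the set $\{z_{k}\}$ has an accumulation point $h\in H$; applying the homeomorphism $x\oplus(\cdot)$, the point $x\oplus h$ is an accumulation point of $\{d_{n_{k}}\}\subset D$ in $G$. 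This contradicts $D$ being closed and discrete, so $G$ is countably compact. For the locally pseudocompact case, by homogeneity it suffices to produce a countably compact neighborhood of $0$: I choose an open $W\subset G/H$ with $\pi(0)\in W$ and $\overline{W}$ compact (local compactness of $G/H$), and show the closed neighborhood $\pi^{-1}(\overline{W})$ of $0$ is countably compact by repeating the argument above, noting that any infinite subset of $\pi^{-1}(\overline{W})$ without an accumulation point there would be closed and discrete in $G$ and have $\pi$-image an infinite closed subset of the compact set $\overline{W}$, leading to the same contradiction. Hence $G$ is locally countably compact.
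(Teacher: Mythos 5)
Your proof is correct, and its skeleton matches the paper's: both invoke Corollary~\ref{3tl1} to obtain the closed countably compact $L$-subgyrogroup $H$ with $G/H$ (locally) compact metrizable and $\pi$ closed, both pass to an infinite closed discrete set $D$ and use Lemma~\ref{3yl2} to see that $\pi(D)$ is infinite. Where you diverge is in how the contradiction is extracted. The step you single out as the crux --- the tube-lemma argument forcing lifts $d_{n_k}$ into $x\oplus V_k$, followed by Claim~1 of Theorem~\ref{3dl1} to manufacture an accumulation point in the fiber --- is sound, but it is a detour. The paper instead uses the one-line observation that a closed map carries closed discrete sets to closed discrete sets (every subset of $\pi(D)$ is the image of a closed subset of $D$, hence closed), so $\pi(D)$ is an infinite closed \emph{discrete} subset of the compact space $G/H$, an immediate contradiction; no metrizability, no convergent sequence, and no appeal to the internal claims of Theorem~\ref{3dl1} are needed. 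Similarly, in the local case the paper just notes that $\pi^{-1}(V)$ is countably compact for a compact neighborhood $V$ of $\pi(0)$, since $\pi$ is closed with countably compact fibers, rather than re-running the whole argument inside $\pi^{-1}(\overline{W})$. Your route works and avoids invoking the ``closed images of closed discrete sets are discrete'' fact, but it pays for this by leaning on unexported ingredients (Claims~1 and~3 and the sequence $\{V_n\}$) from the proof of Theorem~\ref{3dl1} and on the metrizability of $G/H$; the paper's argument is shorter and uses only the stated conclusions of Corollary~\ref{3tl1} and Lemma~\ref{3yl2}.
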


\begin{proof}
Assume that $G$ is pseudocompact. It follows from Corollary \ref{3tl1} that $G$ contains a closed countably compact $L$-subgyrogroup $H$ such that the quotient space $G/H$ is compact, and the canonical mapping $\pi$ is closed. Suppose on the contrary that $G$ is not countably compact, then there exists an infinite closed discrete subset $D$ of $G$. By Lemma \ref{3yl2}, $\pi (D)$ is infinite. Moreover, since $\pi$ is a closed mapping, $\pi (D)$ is closed and discrete in $G/H$. However, $G/H$ is compact, which is a contradiction. Then $G$ is countably compact.

If $G$ is locally pseudocompact, By Corollary \ref{3tl1}, there exists a compact neighborhood $V$ of $\pi (0)$ in the quotient space $G/H$. Since $\pi$ is a closed mapping, $\pi^{-1}(V)$ is a countably compact neighborhood of $0$ in $G$. Therefore, $G$ is locally countably compact.
\end{proof}

Until now, we do not know whether the inverse of Theorem \ref{3dl1} also holds. Therefore, we pose the following question.

\begin{question}
Let $G$ be a strongly topological gyrogroup with a symmetric neighborhood base $\mathscr U$. If $G$ contains a closed, countably compact, admissible subgyrogroup $P$ such that the quotient space $G/P$ is metrizable and the canonical homomorphism $\pi :G\rightarrow G/P$ is closed, is $G$ strongly countably complete?
\end{question}

{\bf Acknowledgements}. The authors are thankful to the
anonymous referees for valuable remarks and corrections and all other sort of help related to the content of this article.

\end{document}